\theoremstyle{plain}
\newtheorem{theorem}{Theorem}[section]
\newtheorem{lemma}[theorem]{Lemma}
\newtheorem{proposition}[theorem]{Proposition}
\newtheorem{definition}[theorem]{Definition}
\newtheorem{example}[theorem]{Example}
\newtheorem{remark}[theorem]{Remark}
\newtheorem*{acknowledgment}{Acknowledgment}
\newtheorem*{remark-non}{Remark}
\newenvironment{customthm}[1]
  {\innercustomthm}
  {\endinnercustomthm}
\theoremstyle{definition}
\DeclareMathOperator{\height}{height}
\DeclareMathOperator{\Output}{\textbf{Output}}
\DeclareMathOperator{\Input}{\textbf{Input}}
\DeclareMathOperator{\unmixed}{\textbf{unmixed}}
\DeclareMathOperator{\Ap}{Ass}
\DeclareMathOperator{\ggcd}{ggcd}
\DeclareMathOperator{\pinvert}{\textbf{pinvert}}
\DeclareMathOperator{\lc}{lc}
\DeclareMathOperator{\Res}{Res}
\DeclareMathOperator{\prem}{sprem}
\DeclareMathOperator{\Rep}{Rep}
\DeclareMathOperator{\uupper}{up}
\DeclareMathOperator{\llower}{low}
\DeclareMathOperator{\adj}{adj}
\DeclareMathOperator{\lead}{lead}
\author[cuny]{E.~Amzallag}
\ead{eamzallag@gradcenter.cuny.edu}
\author[cuny]{M.~Sun}
\ead{msun@gradcenter.cuny.edu}
\author[ia]{G.~Pogudin}
\ead{pogudin.gleb@gmail.com}
\author[tdtu]{T.~N.~Vo\corref{cor}}
\ead{vongocthieu@tdtu.edu.vn}
\address[cuny]{CUNY Graduate Center, Department of Mathematics, NY 10016, USA}
\address[ia]{Institute for Algebra, Johannes Kepler University, 4040, Linz, Austria}
\address[tdtu]{Faculty of Mathematics and Statistics, Ton Duc Thang University, Ho Chi Minh City, Vietnam}
\begin{document}

\begin{keyword}
triangular set \sep unmixed algebraic set \sep regular chain \sep radical polynomial ideal \sep Gr\" obner basis \sep complexity

\MSC 14Q20 \sep 68W30 \sep 13P99
\end{keyword}


\title{Complexity of Triangular Representations of Algebraic Sets\tnoteref{thanks}}
\tnotetext[thanks]{Work was supported by the strategic program ``Innovatives O\" O 2020'' by the Upper Austrian Government, by Austrian Science Fund FWF grant Y464-N18, by the NSF grants CCF-095259, CCF-1563942, DMS-1606334, by the NSA grant \#H98230-15-1-0245, by CUNY CIRG \#2248, and by PSC-CUNY grant \#69827-00 47.}


\begin{abstract}
Triangular decomposition is one of the standard ways to represent the radical of a polynomial ideal.
A general algorithm for computing such a decomposition was proposed by A.\,Sz\'ant\'o.
In this paper, we give the first complete bounds for the degrees of the polynomials and the number of components
in the output of the algorithm, providing explicit formulas for these bounds.
\end{abstract}
\maketitle

\section{Introduction}

The general problem considered in the paper is: given polynomials {$f_0, \ldots, f_r \in k[x_1, \ldots, x_n]$}, where $k$ is a computable subfield of $\mathbb{C}$, 
represent the set of all polynomials vanishing on the set of solutions of the system $f_0 = \ldots = f_r = 0$.
This set of polynomials is called \emph{the radical} of the ideal generated by $f_0, \ldots, f_r$.
The problem is important for computer algebra and symbolic computations, as well as for their applications 
(for example, \citep{OvchinnikovPogudinVo,BurgisserSchei}).
Several techniques can be used to solve the problem; for example, Gr\"obner bases, geometric resolution, and triangular decomposition.
Representing the radical of an ideal is an intermediate step in many other algorithms.
Thus, it is crucial to understand the size of such a representation, as the size affects the complexity of the further steps.
The size of the representation can be expressed in terms of a degree bound for the polynomials appearing in the representation and their number.
The main result of the paper is the first complete bound on the degrees (Theorem~\ref{thm:degreeBound}) and the number of components (Theorem~\ref{thm:BoundComponents})
for the algorithm designed by A.\,Sz\'ant\'o in~\citep{SzantoThesis} for computing a triangular decomposition.

For Gr\"obner bases, a bound which is doubly-exponential in the number of variables is given in~\citep{Laplagne}.
Moreover, an example constructed in~\citep{ChistovDoubExpLowBound} shows that there are ideals such that every set of generators of
the radical (even those sets that are not Gr\"obner bases) contains a polynomial of doubly-exponential degree.
Geometric resolution and triangular decomposition do not represent the radical via its generators, so it was hoped that these representations might have better degree bounds.
For geometric resolution, singly-exponential degree bounds were obtained in~\citep{GiustiLecerf,Lecerf00,Lecerf03}
(for prior results in this direction, see references in~\citep{Lecerf03}).

Algorithms for triangular decomposition were an active area of research during the last two decades.
Some results of this research were tight degree upper bounds for a triangular decomposition of an algebraic variety
given that the decomposition is irredundant~\citep{Schost,SchostDahan}, an efficient algorithm for zero-dimensional
varieties~\citep{Lifting}, and implementations~\citep{Epsilon,RegularChains}.
However, to the best of our knowledge, there are only a few algorithms~\citep{GalloMishra,SzantoThesis,Schost} 
for computing triangular decomposition with proven degree upper bounds for the output.
The algorithms in~\citep{Schost} and~\citep{GalloMishra} have restrictions on the input polynomial system.
The algorithm in~\citep{Schost} requires the system to define an irreducible variety.
The algorithm in~\citep[Theorem~4.14]{GalloMishra} produces a characteristic set of an ideal, which represents the radical of the ideal only if the ideal is characterizable~\citep[Definition~5.10]{Hubert1} (for example, an ideal defined by $x_1x_2$ is not characterizable).
Together with~\citep[Proposition~5.17]{Hubert1} this means that the algorithm from~\citep{GalloMishra} represents the radical of an ideal if the radical can be defined by a single regular chain.

The algorithm designed by~\cite{SzantoPaper,SzantoThesis} does not have any restrictions on the input system.
However, it turns out that the argument in~\citep{SzantoThesis} 
does not imply the degree bound $d^{O(m^2)}$ ($m$ is the maximum codimension of the components of the ideal, $d$ is a bound for degrees of the input polynomials) stated there.
The reason is that the argument in~\citep{SzantoThesis} did not take into account possible redundancy of the output (see Remark~\ref{rem:IncreasingDegreeBound}).
Moreover, in Example~\ref{ex:redundant} we show that the sum of degrees of 
extra components produced by the algorithm can be significantly 
larger than the degree of the original variety.
In this paper, we take these extra components into account and prove
an explicit degree bound of the form $d^{\mathrm{O}(m^3)}$ for the algorithm.
More precisely, we prove that:

\begin{customthm}{}[Theorem \ref{thm:degreeBound}]
Let $f_0, \ldots, f_r \in k[x_1, \ldots, x_n]$ be polynomials with $\deg f_i \leq d$ for all $0 \leq i \leq r$ ($d > 1$).
Assume that the maximum codimension of prime components of the ideal $(f_0, \ldots, f_r)$ is $m \geq 2$, and $r \leq d^m$.
Then the degree of any polynomial $p$ appearing in the output of Sz\'ant\'o's algorithm or during the computation does not exceed
\begin{align*}
\deg(p) \leq n d^{(\frac{1}{2}+\epsilon) m^3}
\end{align*}
where  $\epsilon$ is some decreasing function of $m, d$ and $\epsilon$ is bounded by $5$
(for a more general statement, we refer to Section~\ref{sec:degrees}).
\end{customthm}

\begin{customthm}{}[Theorem \ref{thm:BoundComponents}]
Let $F \subset k[x_1,\ldots,x_n]$ be a finite set of polynomials of degree at most $d$. Let $m$ be the maximum of codimension of prime components of $\sqrt{(F)} \subseteq k[x_1,\ldots,x_n]$. 
Then the number of squarefree regular chains in the output of Sz\'ant\'o's algorithm applied to $F$ is at most
$$\binom{n}{m}\left( (m + 1)d^m+1\right)^m.$$
\end{customthm}


\section{Preliminaries}
\label{sec:preliminaries}

Throughout the paper, all fields are of characteristic zero and all logarithms are binary.

Throughout this section, let $R = k[x_1, x_2, \ldots, x_n]$, where $k$ is a field.  
We fix an ordering on the variables $x_1 < x_2 < \dots < x_n$.  
Consider a polynomial $p \in R$.
We set $\height(p):= \displaystyle \max_i\deg_{x_i}(p)$.  
The highest indeterminate appearing in $p$ is called its leader and will be defined by $\lead(p)$.
By $\text{lc}(p)$ we denote the leading coefficient of $p$ when $p$ is written as a univariate polynomial in $\lead(p)$.
\begin{definition}
  Given a sequence $\Delta = (g_1, g_2, \ldots, g_m)$ in $R$,  
  we say that $\Delta$ is a \emph{triangular set} if $\lead(g_i) < \lead(g_j)$ for all $i<j$.
\end{definition}

\begin{remark}
Note that any subsequence of a triangular set is a triangular set.  In what follows, the subsequences of $\Delta$ of particular interest are the ones of the form $\Delta_j := (g_1, g_2, \ldots, g_j)$, $1 \leq j \leq m$ and $\Delta_0 := \varnothing$.
\end{remark}

Triangular sets give rise to ideals via the following notion.
\begin{definition}
Let $f, g \in R$ with $\lead(g) = x_j$.  
We consider $f$ and $g$ as univariate polynomials in $x_j$ with the coefficients from the field $k(x_1, x_2, \ldots, x_{j-1}, x_{j+1}, \ldots, x_n)$ and let $f = \tilde{q}g + \tilde{r}$ be the result of univariate polynomial division of $f$ by $g$ with coefficients in this field.
Let $\alpha$ be the smallest nonnegative integer such that $g := \lc{(g)}^\alpha \tilde{g}$ and $r := \lc{(g)}^\alpha \tilde{r}$ are polynomials, so we obtain an equation
\begin{align*}
\lc{(g)}^{\alpha}f = q g + r
\end{align*}
with $q, r \in R, \deg_{x_j}(r) < \deg_{x_j}(g), \alpha \in \mathbb{N}$.  
One can show that $\alpha \leq \deg_{x_j}(f) - \deg_{x_j}(g) + 1$.  
For uniqueness of $q, r$, we require $\alpha$ to be minimal.  
We say that $r$ is \emph{pseudoreminder of $f$ by $g$} and denote it by $\prem(f,g)$.
\end{definition}

\begin{definition}
Let $\Delta = (g_1, g_2, \ldots, g_m)$ be a triangular set and let $f \in R$.  
The \emph{pseudoremainder of $f$ with respect to $\Delta$} is the polynomial $f_0$ in the sequence $f_m = f, f_{s-1} = \prem(f_{s}, g_{s}), 1 \leq s \leq m$.  We denote $f_0$ by $\prem(f, \Delta)$.

We say that $f$ is \emph{reduced with respect to $\Delta$} if $f=\prem(f, \Delta)$.
\end{definition}

\begin{remark}
The computation of the pseudoremainder of $f$ with respect to $\Delta$ gives rise to the equation
\begin{align*}
\lc{(g_m)}^{\alpha_m}\dots \lc{(g_1)}^{\alpha_1}f = \sum_{s=1}^m q_sg_s+f_0
\end{align*}
where each $\alpha_s \leq \deg_{\lead{(g_s)}}(f_s) - \deg_{\lead (g_s)}(g_s) + 1$.
\end{remark}

\begin{definition}
Given a triangular set $\Delta$ in $R$, we define the ideal
\[
\Rep(\Delta) := \{ p \in R \,|\, \exists N \,:\, H^Np \in \langle \Delta \rangle\}, \text{ where } H := \lc(g_1)\ldots \lc(g_m).
\]
We say that a triangular set $\Delta \subset R$ \emph{represents} an ideal $I$ if $I = \Rep(\Delta)$.
\end{definition}

\begin{definition}\label{def:ap}
For an ideal $I \subset R$, we consider the irredundant prime decomposition $\sqrt{I} = I_1 \cap \ldots \cap I_r$ of its radical.   
We call the $I_1, \ldots, I_r$ \emph{the associated primes} of $I$ and denote the set of associated primes of $I$ by $\Ap(I)$.  
 When $I = \Rep(\Delta)$, we will write $\Ap(\Delta)$ instead of $\Ap(I)$.

We say that $\sqrt{I}$ and the corresponding variety $V(I)$ are \emph{unmixed} if all the associated prime ideals have the same dimension.
\end{definition}

\begin{definition}
  Let $\Delta = (g_1, g_2, \ldots, g_m)$ be a triangular set of $R$
with $I = \Rep(\Delta)$ 
and, for each $1 \leq i \leq m-1$, let $\{P_{i,j}\}_{j=1}^{r_i}$ be the prime ideals in the irredundant prime decomposition of the radical of $\Rep(\Delta_i)$.  
\begin{enumerate}[label=(\alph*),itemsep=2pt]
\item\label{cond:a}
if $\text{lc}(g_{i+1}) \notin  P_{i,j}$ for every for every $1 \leq i \leq m - 1$ and $1 \leq j \leq r_i$, then $\Delta$ is called \emph{a regular chain}, see~\cite[Definition~5.7]{Hubert1}.
\item\label{cond:b}
if $g_{i+1}$ is square-free over $K(P_{i,j}) := \text{Quot}(R/P_{i,j})$ for every $1 \leq j \leq r_i$ and $1 \leq j \leq r_i$, then $\Delta$ is called \emph{a squarefree regular chain}, see~\cite[Definition~7.2]{Hubert1} \\
(Here, $\text{Quot}(R/P_{i,j})$ is the field of fractions of $R/P_{i,j}$.)
\end{enumerate}
\end{definition}




\begin{theorem}[see {{\cite[Proposition~2.7]{BurgisserSchei}}}]
  If $\Delta$ is a regular chain, then $\Rep(\Delta) = \{h \in R \,|\, \prem(h,\Delta)=0 \}$ and all of the prime ideals in the irredundant prime decomposition of $\Rep(\Delta)$ have the same dimension.
\end{theorem}

\begin{theorem}[see {{\cite[Corollary~7.3]{Hubert1}}}]
  If $\Delta$ is a squarefree regular chain, then $\Rep(\Delta)$ is a radical ideal.
\end{theorem}

\begin{remark}
We use terminology different from the one used in~\citep[Section~2.4.3]{SzantoThesis}.
The correspondence between these two terminologies is the following: a regular chain is called a weakly unmixed triangular set in~\citep{SzantoThesis} and a squarefree regular chain is called an unmixed triangular set in~\citep{SzantoThesis}.
\end{remark}

Now we are ready to define the main object we will compute.

\begin{definition}
The \emph{triangular decomposition} of an ideal $I \subset R$ is a set $\{\Delta_1, \ldots, \Delta_s\}$ of squarefree regular chains such that 
\[
  \sqrt{I} = \bigcap\limits_{i = 1}^s \Rep(\Delta_i).
\]
\end{definition}

In the rest of the section, we introduce notions and recall results about computing modulo a triangular set.

\begin{definition}\label{def:mult_table}
Let $\Delta = (g_1, \ldots, g_m)$ be a triangular set in $R$ with $\lead{(g_s)} = x_{l+s}$ and $d_s := \deg_{x_{l+s}}(g_s)$ for every $1 \leq s \leq m$, where $l := n - m$.  
We define
\begin{itemize}[itemsep=3pt]
\item
$A(\Delta):=k(x_1, x_2, \ldots, x_l)[x_{l+1}, \ldots, x_n]/( \Delta )_{k(x_1, x_2, \ldots, x_l)}$, where the subscript reminds us that we treat elements of the field $k(x_1, x_2, \ldots, x_l)$ as scalars and consider the quotient $A(\Delta)$ as an algebra over this field.
\item
The \emph{standard basis} of $A(\Delta)$, which we will denote by $B(\Delta)$, is the set
\[
B(\Delta) := \{x_{l+1}^{\alpha_1}\dots x_n^{\alpha_m} \mid 0 \leq \alpha_s < d_s, 1 \leq s \leq m\}.
\]
\item
The set of \emph{structure constants} of $A(\Delta)$ is the collection of the coordinates of all products of pairs of elements of $B(\Delta)$ in the basis $B(\Delta)$.  These structure constants may be organized into a table, which we will refer to as the \emph{multiplication table} for $A(\Delta)$ and which we will denote by $M(\Delta)$.
\item
The \emph{height of the structure constants of $A(\Delta)$} is the maximum of the heights of the entries of $M(\Delta)$.  We denote this quantity by $\Gamma(\Delta)$ or $\Gamma$ when the triangular set under consideration is clear from context.  We will also use the notation $\Gamma_j$ for $\Gamma(\Delta_j)$.
\item
An element of $ A(\Delta)$ is called \emph{integral} if its coordinates in the standard basis $B(\Delta)$ belong to~$k[x_1,\ldots,x_l]$.
\end{itemize}
\end{definition}

\begin{proposition} [{see \citep[Prop. 3.3.1, p.76]{SzantoThesis}}] \label{prop:HtProd}
Let $\Delta$ be a triangular set and let $a_1, a_2, \ldots, a_k$ be elements of $A(\Delta)$ with heights at most $d$.  Moreover, assume that the denominators of the coordinates of $a_1, a_2, \ldots, a_k$ in the basis $B(\Delta)$ divide
$\displaystyle \prod_{s=1}^m \lc(g_s)^{\beta_s} \hspace{.1cm}$ and also assume that
$\displaystyle \hspace{.1cm} \sum_{s=1}^m \beta_s \cdot \height(\lc(g_s)) \leq d'$.
Then 
\begin{itemize}[itemsep=2pt]
\item
$\height(a_1a_2) \leq \height(a_1)+\height(a_2)+2(d'+\Gamma)$ and
\item
$\height(a_1a_2\dots a_k) \leq kd+k\log{k}(d'+\Gamma).$
\end{itemize}
\end{proposition}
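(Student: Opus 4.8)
The plan is to prove the two-factor estimate (the first bullet) directly from the multiplication table $M(\Delta)$, and then to obtain the general estimate by forming the product $a_1 \cdots a_k$ along a \emph{balanced} binary tree, so that the height roughly doubles from one level to the next and the accumulated error is a geometric series; a naive left-to-right product would instead incur an error quadratic in $k$. Throughout I use, coordinate by coordinate, the elementary facts $\height(pq) \leq \height(p) + \height(q)$ and $\height(p+q) \leq \max(\height(p), \height(q))$ for $p, q \in k(x_1, \ldots, x_l)$.

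For the first bullet, expand $a_1 = \sum_l \mu_l b_l$ and $a_2 = \sum_l \nu_l b_l$ in the standard basis $B(\Delta)$, and recall that the multiplication table records reductions $b_i b_j = \sum_l c_{ij}^l b_l$. Put the $\mu_l$, the $\nu_l$, and the structure constants over common denominators that are products of powers of the $\lc(g_s)$. Then: the numerators of the $\mu_l$ have height at most $\height(a_1) + d'$ and those of the $\nu_l$ at most $\height(a_2) + d'$, since by hypothesis their common denominator has height at most $\sum_s \beta_s \height(\lc(g_s)) \leq d'$; and the numerators and common denominator of the $c_{ij}^l$ have height at most $\Gamma$. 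Since the $l$-th coordinate of $a_1 a_2$ equals $\sum_{i,j} \mu_i \nu_j c_{ij}^l$, writing this over the product of the three denominators and applying subadditivity of $\height$ under products together with $\height(p + q) \leq \max(\height(p), \height(q))$ gives $\height(a_1 a_2) \leq \height(a_1) + \height(a_2) + 2(d' + \Gamma)$. The same computation shows that the coordinates of $a_1 a_2$ have denominators dividing $\prod_s \lc(g_s)^{2\beta_s + \gamma_s}$, where $\prod_s \lc(g_s)^{\gamma_s}$ is the common denominator of the structure constants, so the quantity $\sum_s (2\beta_s + \gamma_s) \height(\lc(g_s))$ --- the analogue of $d'$ for $a_1 a_2$ --- is at most $2d' + \Gamma$.

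For the second bullet, first assume that $k = 2^T$ is a power of two; if not, pad the list with copies of $1 \in A(\Delta)$, which has height $0$ and trivial denominator. Multiply the $a_i$ in pairs over $T$ successive rounds. Let $h_t$ bound the heights of the elements present after $t$ rounds, and let $\delta_t$ bound the associated quantity $\sum_s e_s \height(\lc(g_s))$, where $\prod_s \lc(g_s)^{e_s}$ is a common denominator of their coordinates. By the first part, $h_0 = d$, $\delta_0 = d'$, $h_{t+1} \leq 2 h_t + 2(\delta_t + \Gamma)$, and $\delta_{t+1} \leq 2 \delta_t + \Gamma$. Solving the second recursion gives $\delta_t = 2^t d' + (2^t - 1) \Gamma$, and substituting into the first gives $h_{t+1} \leq 2 h_t + 2^{t+1}(d' + \Gamma)$, so that $h_t = 2^t d + t \, 2^t (d' + \Gamma)$. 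Taking $t = T = \log k$ yields $\height(a_1 \cdots a_k) \leq h_T = k d + k \log k \, (d' + \Gamma)$.

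The step I expect to be the main obstacle is the structural input used in the first part: one must know that all entries of $M(\Delta)$ can be brought over a single common denominator that is a product of powers of the leading coefficients $\lc(g_s)$, and that the heights of this denominator and of the resulting numerators --- and the quantity $\sum_s \gamma_s \height(\lc(g_s))$ attached to that denominator --- are all controlled by $\Gamma$. This is precisely where the properties of the quotient algebra $A(\Delta)$ established in \citep{SzantoThesis} are needed; granting them, both bullets reduce to tracking numerator and denominator heights through a product of standard-basis expansions and to solving a two-term linear recursion.
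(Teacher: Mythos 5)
The paper does not prove this proposition; it is quoted verbatim from Szant\'o's thesis. Your argument is essentially the intended one (coordinatewise height-tracking for two factors, then a balanced binary product tree to turn the naively quadratic accumulation into $k\log k$), and both recursions are solved correctly. Two loose ends are worth flagging. First, your padding trick only yields $2^{T}d + T\,2^{T}(d'+\Gamma)$ with $T=\lceil\log k\rceil$, and $2^{\lceil\log k\rceil}$ can be close to $2k$, so for $k$ not a power of two this does not literally give $kd + k\log k\,(d'+\Gamma)$; one either has to accept the bound with $\lceil\log k\rceil$ and $2^{\lceil\log k\rceil}$ in place of $\log k$ and $k$ (which is how such statements are usually meant), or run the recursion on an exact balanced partition of $k$ rather than on padded powers of two. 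Second, the structural input you isolate at the end is genuinely not a consequence of the definitions in this paper: $\Gamma(\Delta)$ is defined as the maximum \emph{height} of the entries of $M(\Delta)$, and the quantity you need for the denominator recursion is $\sum_s \gamma_s\,\height(\lc(g_s))$ where $\prod_s\lc(g_s)^{\gamma_s}$ is the common denominator; since the height of a product is the maximum over variables of a sum of degrees, this sum can strictly exceed the height of the product (e.g.\ $\lc(g_1)=x_1$, $\lc(g_2)=x_2$), hence is not automatically bounded by $\Gamma$. That bound has to come from the analysis of pseudo-division in Szant\'o's thesis, not from the definition of $\Gamma$; you correctly identify this as the one place where an external fact is imported, so the gap is one of citation rather than of reasoning.
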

In Proposition~\ref{prop:HtProd}, if $a_1,\ldots,a_k$ are integral elements, then $\beta_1=\ldots=\beta_s=0$. In this case, one can choose $d'=0$.
We will also use denominator bounds in reducing an element modulo~$\Delta$.  

\begin{lemma} \label{lem:denominator}
Let $\Delta := (g_1,\ldots,g_m) \subset k[x_1,\ldots,x_n]$ be a squarefree regular chain such that $\height (g_s) \leq d$ for all $s=1,\ldots,m$. Let $f \in k[x_1,\ldots,x_n]$ be a polynomial of height at most $t$. Then there exist $\alpha_1, \ldots, \alpha_m \in \mathbb{N}$ and $q_1,\ldots,q_m,r \in k[x_1,\ldots,x_n]$ such that:
\begin{itemize}[itemsep=2pt]
\item $\lc (g_1)^{\alpha_1} \cdot \dots \cdot \lc (g_m)^{\alpha_m} \cdot f = q_1g_1 + \dots + q_m g_m +f_0$,
\item $f_0$ is reduced modulo $\Delta$, and
\item $\alpha_s \leq t (d+1)^{m-s}, \hspace*{.1cm} s=1, 2, \ldots, m$.
\end{itemize}
\end{lemma}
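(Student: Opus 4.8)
The plan is to induct on the number $m$ of polynomials in the chain, reducing $f$ one polynomial at a time starting from the top, i.e.\ from $g_m$. First I would apply the single-polynomial pseudoremainder (the definition of $\prem(f, g_m)$) to obtain
\[
\lc(g_m)^{\alpha_m} f = q_m g_m + f^{(m-1)},
\]
where $\deg_{x_{n}}(f^{(m-1)}) < \deg_{x_n}(g_m)$ and, by the bound recorded right after the definition of pseudoremainder, $\alpha_m \leq \deg_{x_n}(f) - \deg_{x_n}(g_m) + 1 \leq \height(f) + 1 \leq t + 1$. So already at the first step the exponent $\alpha_m$ is bounded by $t+1$, which is $\leq t(d+1)^{0}$ only if $t \geq 1$; one should be slightly careful here, but in fact the claimed bound $\alpha_s \leq t(d+1)^{m-s}$ for $s = m$ reads $\alpha_m \leq t$, so I would want to phrase the single-step bound as $\alpha_m \leq \height_{x_n}(f) \leq t$ in the relevant regime (or absorb the ``$+1$'' into the statement — this is a minor bookkeeping point, but it is the first place where one must be attentive to whether the bound is $t$ or $t+1$).

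Next comes the inductive step. Having produced $f^{(m-1)}$, I want to reduce it modulo the shorter chain $\Delta_{m-1} = (g_1, \ldots, g_{m-1})$. The key estimate I need is a bound on $\height(f^{(m-1)})$, or more precisely on $\deg_{x_j}(f^{(m-1)})$ for $j \leq n-1$, in terms of $t$ and $d$. From the equation $\lc(g_m)^{\alpha_m} f = q_m g_m + f^{(m-1)}$, the polynomial $f^{(m-1)}$ is obtained as a $k[x_1,\ldots,x_{n-1}]$-linear combination of the coefficients of $f$ and $g_m$ arising through the Euclidean-type division, so its height in the remaining variables is controlled by something like $\height(f) + \alpha_m \cdot \height(g_m) \leq t + t \cdot d = t(d+1)$ (roughly; the precise constant is again a bookkeeping matter, and I would track it carefully). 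This is exactly where the geometric factor $(d+1)$ per level enters. Applying the induction hypothesis to $f^{(m-1)}$ (of height at most $t(d+1)$) and the chain $\Delta_{m-1}$ gives exponents $\alpha_s \leq t(d+1) \cdot (d+1)^{(m-1)-s} = t(d+1)^{m-s}$ for $s = 1, \ldots, m-1$, which is the claimed bound, and it gives $q_1, \ldots, q_{m-1}$ and $r = f_0$ reduced modulo $\Delta_{m-1}$. Since $f_0$ is also reduced in $x_n$ (its $x_n$-degree was already below $\deg_{x_n}(g_m)$ and reduction by $\Delta_{m-1}$ does not touch $x_n$), $f_0$ is reduced modulo all of $\Delta$, as required.

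Finally I would assemble the combination equation: multiply the induction-step identity $\lc(g_1)^{\alpha_1}\cdots\lc(g_{m-1})^{\alpha_{m-1}} f^{(m-1)} = q_1 g_1 + \cdots + q_{m-1} g_{m-1} + f_0$ by $\lc(g_m)^{\alpha_m}$ and substitute $\lc(g_m)^{\alpha_m} f^{(m-1)} = \lc(g_m)^{\alpha_m} f - q_m g_m$ (after also multiplying $q_m g_m$ through), collecting terms to get the single displayed equation with new quotients $\tilde q_s := \lc(g_m)^{\alpha_m} q_s$ for $s < m$ and $\tilde q_m := q_m \cdot (\text{product of the other } \lc^{\alpha})$. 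The main obstacle I anticipate is not the logical structure, which is a clean top-down induction, but the careful propagation of the height bound through the division step: one must verify that pseudo-dividing by $g_m$ inflates the height in the lower variables by at most the factor $(d+1)$ and not more, and keep the ``$+1$'' slack in the exponent bound consistent with the statement $\alpha_s \leq t(d+1)^{m-s}$. I would also note that the squarefree-regular-chain hypothesis is not actually needed for this lemma — only $\height(g_s) \leq d$ is used — but it does no harm to keep it as stated.
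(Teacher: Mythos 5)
Your proof is correct and is essentially the argument the paper intends: the paper's proof is only a one-line reference to \citep[Lemma~3.7]{BurgisserSchei}, and the content of that reference is exactly your top-down induction (pseudo-divide by $g_m$, bound the height inflation by a factor of $d+1$, then recurse on $\Delta_{m-1}$). The ``$t$ versus $t+1$'' worry you flag resolves itself, since $\deg_{x_n}(g_m)\geq 1$ (as $x_n$ is the leader of $g_m$) gives $\alpha_m \leq \deg_{x_n}(f)-\deg_{x_n}(g_m)+1 \leq t$ directly, and your observation that the squarefree-regular-chain hypothesis is not actually used is also accurate.
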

\begin{proof} Similar to \citep[Lemma 3.7]{BurgisserSchei}.
\end{proof}

\begin{remark}
Gallo and Mishra gave a bound in \cite[Lemma~5.2]{GalloMishra} for the degree of the pseudoremainder $f_0$.
We compare that bound with the corresponding bound on $f_0$ that can be derived from Lemma~\ref{lem:denominator}.  
In the table below, OB stands for ``Our Bound'' and GM stands for ``Gallo-Mishra.''

\begin{center}
\begin{tabular}{ |c|c|c| } 
 \hline
& $\height(g_s) \leq d \;\&\; \height(f) \leq t$ & $\deg(g_s) \leq d \;\&\; \deg(f) \leq t$ \\
\hline
 $\deg(f_0)$ & OB: $nt(d+1)^m$ & OB: $nt(d+1)^m$ \\ & GM: $(nt+1)(nd+1)^m$ & GM: $(t+1)(d+1)^m$ \\
\hline
 $\height(f_0)$ & OB: $t(d+1)^m$ & OB: $t(d+1)^m$ \\
 & GM: $(nt+1)(nd+1)^m$ & GM: $(t+1)(d+1)^m$ \\
 \hline
\end{tabular}
\end{center}

We see that the only case in which the bound from~\citep[Lemma~5.2]{GalloMishra} is smaller than the corresponding one derived from Lemma~\ref{lem:denominator} is represented by the upper-right cell, in which solely degrees are considered.  
In fact, \cite{GalloMishra} analyzes the complexity of the Ritt-Wu Characteristic Set Algorithm in terms of degrees.  
So our pseudoremainder bound cannot be used to improve their complexity analysis and vice versa, 
as can be seen by examining the lower-left cell in which heights are the focus.
\end{remark}

\section{Outline of Sz\'ant\'o's algorithm}
In this section, we recall main steps of the algorithm in \citep{SzantoThesis} for computing a triangular decomposition for a given algebraic set. The main algorithm is described in \citep[Theorem~4.1.7, p.~118]{SzantoThesis} and its proof. 

\begin{algorithm}[H]
\caption{Triangular decomposition 
algorithm}\label{alg:SzantoAlgorithm}

\begin{description}[leftmargin=2.5em,style=multiline]
  \item[In] A set of polynomials $F=\{f_0,f_1,\ldots,f_r\} \subset k[x_1,\ldots,x_n]$.
  
  \item[Out] A set $\Theta(F)$ of squarefree regular chains such that
\[
\sqrt{\left < F \right >}= \bigcap\limits_{\Delta \in \Theta}{\Rep(\Delta)}.
\]
\end{description}

\begin{enumerate}[leftmargin=!,labelwidth=1.5em, label=(\alph*),itemsep=3pt]
	\item\label{alg_one} For every $\mathbf{i} \subsetneq \{1, \ldots, n\}$, compute a regular chain $\Delta_{\mathbf{i}}$ with leaders $\{x_j | j \not\in \mathbf{i}\}$  
    such that for every prime component $P$ of $\sqrt{( F )}$
    \[
    \bigl( \dim (P) = |\mathbf{i}| \text{ and } P \cap k[x_i\mid i \in \mathbf{i}] = \{0\}\bigr) \Rightarrow \Rep(\Delta_{\mathbf{i}}) \subseteq P.
    \]
    For details, see \citep[Cor. 4.1.5, p. 115]{SzantoThesis}.
	
    \item\label{alg_two} For every $\mathbf{i} \subsetneq \{1, \ldots, n\}$, compute the multiplication table $M(\Delta_{\mathbf{i}})$ of the algebra $A(\Delta_{\mathbf{i}})$ (see Definition~\ref{def:mult_table}).
    
	\item\label{alg_three} For every $\mathbf{i} \subsetneq \{1, \ldots, n\}$, compute a set $\mathcal{U}(\Delta_{\mathbf{i}})$ of squarefree regular chains 
    \[ 
    \unmixed_{|\Delta_{\mathbf{i}}|}^{|\mathbf{i}|}(\Delta_{\mathbf{i}} ,M(\Delta_{\mathbf{i}}), f, 1), \text{ where } f := \sum\limits_{j=0}^{r} f_ix_{n + 1}^j
    \]
    using Algorithm~\ref{alg:Unmixed_m_l} below.
	\item\label{alg_four} \textbf{Return} $\Theta(F):=\bigcup\limits_{\mathbf{i} \subsetneq \{1, \ldots, n\}}{\mathcal{U}(\Delta_{\mathbf{i}})}.$
\end{enumerate}
\end{algorithm}

Step~\ref{alg_three} of Algorithm~\ref{alg:SzantoAlgorithm} uses function $\unmixed$ with the following full specification.
Parts concerning multiplication tables are technical and important only for efficiency.

\noindent
\hypertarget{specunmixed}{\textbf{Specification of $\unmixed^{l}_{m}$.}}

\begin{description}[leftmargin=1.5em,style=multiline]
  \item[In] 
  \begin{enumerate}[leftmargin=!,labelwidth=2.5em,itemsep=0cm]
  \item Nonnegative integers $m$ and $l$. We set $n := m + l$.
  \item A regular chain 
 $\Delta=\{g_1, \ldots, g_m\} \subset k[x_1, \ldots, x_n]$ such that for all $1 \leq s \leq m$
	\begin{itemize}[leftmargin=0.5em,itemsep=3pt]
		\item $\lead(g_s)=x_{l + s}$;
		\item $\lc (g_s) \in k[x_1,\ldots,x_l]$;
		\item $g_s$ is reduced modulo $\{g_1, \ldots, g_{s - 1}\}$.
	\end{itemize}
    \item The multiplication table $M(\Delta)$ of the algebra $A(\Delta)$, see Definition~\ref{def:mult_table}. 
    \item Polynomials $f$, $h$ in $k[x_1, \ldots, x_{n + c}]$ for some $c > 0$ reduced with respect to $\Delta$.
  \end{enumerate}
  
  \item[Out] \hspace{3mm} A set $\{(\Delta_1, M(\Delta_1)), \ldots, (\Delta_r, M(\Delta_r))\}$ such that
  \begin{itemize}[leftmargin=3em,itemsep=3pt]
    \item $\Delta_i$ is a squarefree regular chain in $k[x_1, \ldots, x_n]$ for every $1 \leq i \leq r$;
    \item $M(\Delta_i)$ is the multiplication table of the algebra $A(\Delta_i)$ for every $1 \leq i \leq r$;
    \item $\bigcup\limits_{i=1}^{r}{\Ap(\Delta_i)} = \{ P \in \Ap(\Delta) \,|\, {f} \equiv 0, \, {h} \not\equiv 0 \, \mod P \}$ (see Definition~\ref{def:ap});
	\item $\Ap(\Delta_i) \cap \Ap(\Delta_j) = \varnothing \,\; \forall\; i \neq j$.
  \end{itemize}
\end{description}

Before describing the algorithm itself, we will give some intuition behind it.

Informally speaking, the main goal of $\unmixed$ is to transform a single regular chain $\Delta$ into a set of regular chains $\Delta_1, \ldots, \Delta_r$ such that
\begin{enumerate}[label=(\alph*)]
  \item\label{item:sq_free} $\Delta_1, \ldots, \Delta_r$ are squarefree regular chains;
  \item\label{item:no_extra} prime components of $\bigcap\limits_{i = 1}^r \Rep(\Delta_i)$ are exactly the prime components of $\Rep(\Delta)$, on which $f$ vanishes and $h$ does not vanish.
\end{enumerate}
It is instructive first to understand how this transformation is performed in the univariate case, i.e. in the case when all regular chains consist of a single polynomial only.
This case is also discussed in~\citep[p.~124-125]{SzantoThesis}. 
Let $\Delta$ consist $g(x) \in k[x]$. 
A polynomial satisfying only property~\eqref{item:no_extra} can be computed using gcd's as follows
\begin{equation}\label{eq:univ_fh}
\frac{\gcd_x(g, f)}{\gcd_x(g, f, h)}.
\end{equation}
A set of polynomials satisfying only property~\eqref{item:sq_free} can be obtained by separating the roots of $g(x)$ according to their multiplicity again using gcd's
\begin{equation}\label{eq:univ_sqfree}
\frac{g \gcd_x(g, g', g'')}{\gcd^2_x(g, g')}, \frac{\gcd_x(g, g') \gcd_x(g, g', g'', g^{(3)})}{\gcd^2_x(g, g', g'')}, \ldots 
\end{equation}
Formulas~\eqref{eq:univ_fh} and~\eqref{eq:univ_sqfree} can be combined to yield to a set of polynomials satisfying both properties~\eqref{item:sq_free} and~\eqref{item:no_extra}:
\begin{equation}\label{eq:univ}
  q_i := \frac{\gcd_x(g, \ldots, g^{(i - 1)}, f) \gcd_x(g, \ldots, g^{(i + 1)}, f) \gcd^2_x(g, \ldots, g^{(i)}, f,  h)}{\gcd_x^2(g, \ldots, g^{(i)}, f) \gcd_x(g, \ldots, g^{(i - 1)}, f, h) \gcd_x(g, \ldots, g^{(i + 1)}, f, h)}, \;\; i = 1, 2, \ldots, \deg g.
\end{equation}

The generalization of this approach to the multivariate case is based on two ideas
\begin{enumerate}[label=(\alph*)]
  \item Perform the same manipulations with $g_m$ considered as univariate polynomials in $x_n$.
  \item Replace the standard univariate $\gcd$ with the \emph{generalized gcd} (denoted by $\ggcd$), that is a $\gcd$ modulo a regular chain $\Lambda := \{g_1, \ldots, g_{m - 1}\}$. Generalized gcds are described in~\citep[Lemma~3.1.3]{SzantoThesis}. Formula~\eqref{eq:univ} is replaced then by
  \begin{equation}\label{eq:ggcd}
    q_i := \frac{\ggcd_{x_n}(\Lambda, g_m, \ldots, g_m^{(i - 1)}, f) \ggcd_{x_n}(\Lambda, g_m, \ldots, g_m^{(i + 1)}, f) \ggcd^2_{x_n}(\Lambda, g_m, \ldots, g_m^{(i)}, f,  h)}{\ggcd_{x_n}^2(\Lambda, g_m, \ldots, g_m^{(i)}, f) \ggcd_{x_n}(\Lambda, g_m, \ldots, g_m^{(i - 1)}, f, h) \ggcd_{x_n}(\Lambda, g_m, \ldots, g_m^{(i + 1)}, f, h)}
  \end{equation}
  for $i = 1, 2, \ldots, \deg_{x_n} g_m$.
\end{enumerate}

Generalized gcd is always well-defined modulo a regular chain representing a prime ideal.
If the ideal represented by the regular chain is not prime, then generalized gcds modulo different prime components might have different degree, so it might be impossible to ``glue'' them together.
In order to address this issue, the $\unmixed$ function splits $\Rep(\Lambda)$ into a union of varieties represented by regular chains, over which all the generalized gcds in~\eqref{eq:ggcd} will be well defined.
Interestingly, this can be done by calling $\unmixed$ recursively, because the fact that some generalized gcd is well-defined and has degree $d$ can be expressed using equations and inequations.
These equations and inequations can be further combined with $f$ and $h$.

\begin{algorithm}[H]
\caption{Function $\unmixed_m^l(\Delta, M(\Delta), f, h)$}\label{alg:Unmixed_m_l}

Input and output are described in the~\hyperlink{specunmixed}{specification above}.

\begin{enumerate}[label=(\alph*),itemsep=3pt]
	\item If $m = 0$ (so $\Delta=\varnothing$), \textbf{return} $\varnothing$ if $f \neq 0$ or $h = 0$, and \textbf{return} $\{(\varnothing, \varnothing)\}$ otherwise
	\item Set $\Lambda:= \Delta_{m - 1} = \{g_1,\ldots,g_{m-1}\}$ and compute $M(\Lambda)$.
    \item\label{alg:unmixed_three} For every $1 \leq i \leq \deg_{x_{n}}g_m$ and every tuple $\mathbf{v} \in \mathbb{Z}_{\geqslant 0}^6$ with entries not exceeding $\deg_{x_{n}}g_m$, 
    compute
    a pair of polynomials $\phi_{i, \mathbf{v}}, \psi_{i, \mathbf{v}}$
    as described in~\citep[p.~128]{SzantoThesis}
    such that a system $\phi_{i, \mathbf{v}} = 0, \; \psi_{i, \mathbf{v}} \neq 0$ is equivalent to 
    \begin{itemize}
      \item $f = 0$ and $h \neq 0$,
      \item all six generalized gcds in~\eqref{eq:ggcd} are well-defined and their degrees are the entries of $\mathbf{v}$.
    \end{itemize}
    Formulas for $\phi_{i, \mathbf{v}}$ and $\psi_{i, \mathbf{v}}$ are given in the proof of Lemma~\ref{lem:Input(s)} and in~\citep[p.~128]{SzantoThesis}.
	\item  \label{alg:unmixed_four} For every pair $(\phi_{i,\mathbf{v}},\, \psi_{i,\mathbf{v}})$ computed in the previous step
    \begin{enumerate}[label=(\roman*)]
      \item Compute 
    \[\mathcal{L}_{i,\mathbf{v}} := \unmixed_{m - 1}^l(\Lambda, M(\Lambda), \phi_{i,\mathbf{v}},  \psi_{i,\mathbf{v}}).
    \]
      \item\label{out1} For every $(\Lambda_{i, \mathbf{v}},M(\Lambda_{i, \mathbf{v}})) \in \mathcal{L}_{i, \mathbf{v}}$ compute $q_{i, \mathbf{v}}$ using~\eqref{eq:ggcd} (more details in the proof of Theorem~\ref{thm:outputs} and in \citep[p.~129-130]{SzantoThesis}) 
      \item\label{out2} For every $q_{i, \mathbf{v}}$ computed in the previous step, add $(\Lambda_{i, \mathbf{v}} \cup \{ q_{i, \mathbf{v}} \}, M(\Lambda_{i, \mathbf{v}} \cup \{ q_{i, \mathbf{v}} \}))$ to the \textbf{output}
    \end{enumerate}
    \item \textbf{Return} the set of all pairs $(\Lambda_{i, \mathbf{v}} \cup \{ q_{i, \mathbf{v}} \}, M(\Lambda_{i, \mathbf{v}} \cup \{ q_{i, \mathbf{v}} \}))$ computed in the previous step 
	\end{enumerate}
\end{algorithm}

\begin{example}\label{ex:redundant}
  In this example, we will show that the output of Algorithm~\ref{alg:SzantoAlgorithm} can be redundant confirming~\citep[Remark~2.9]{BurgisserSchei}.
  We fix a positive integer $D$ and consider
  \begin{equation}\label{eq:redundant_example}
  F := \{ (x_1 - 1)(x_1 - 2)\ldots(x_1 - D)(x_2 - 1)(x_2 - 2)\ldots(x_2 - D) \}.
  \end{equation}
  Step~\ref{alg_one} of Algorithm~\ref{alg:SzantoAlgorithm} will output the following regular chains (see~\citep[Corollary~4.1.5]{SzantoThesis} for details)
  \begin{align*}
    &\Delta_{\{1\}} = \Delta_{\{2\}} = \{ (x_1 - 1)(x_1 - 2)\ldots(x_1 - D)(x_2 - 1)(x_2 - 2)\ldots(x_2 - D) \},\\
    &\Delta_{\varnothing} = \{ (x_1 - 1)(x_1 - 2)\ldots(x_1 - D)p_1(x_1), (x_2 - 1)(x_2 - 2)\ldots(x_2 - D)p_2(x_2) \},
  \end{align*}
  where $p_1(x_1)$ and $p_2(x_2)$ are additional factors, which can appear during the computation with Canny's generalized resultants (see~\citep[Proposition~4.1.2]{SzantoThesis}).
  
  At Step~\ref{alg_three} of Algorithm~\ref{alg:SzantoAlgorithm}, $\unmixed_2^0(\Delta_{\varnothing}, M(\Delta_{\varnothing}), f, 1)$ will be computed. 
  According to the \hyperlink{specunmixed}{specification} of $\unmixed$, the result of this computation will be a triangular decomposition of the set of common zeros of $\Rep(\Delta_{\varnothing})$ and $F$.
  Since the zero set of $\Rep(\Delta_{\varnothing})$ is finite, all these components are not components of the zero set of $F$.
  Points $\{(a_1, a_2) | a_1, a_2 \in \{1, 2, \ldots, D\}\}$ are common zeros of $\Rep(\Delta_\varnothing)$ and $F$, so the sum of the degrees of these extra components is at least $D^2$, and the degree of the zero set of $F$ is just $2D$.
  
  Moreover, this example can be generalized to higher dimensions by replacing~\eqref{eq:redundant_example} by
  \[
  F := \{ (x_1 - 1)(x_1 - 2)\ldots(x_1 - D) \ldots (x_n - 1)(x_n - 2)\ldots(x_n - D) \}.
  \]
  The degree of the zero set of $F$ is $nD$, but the sum of the degrees of extra components will be at least~$D^n$.
\end{example}


\section{Bounds for degrees}
\label{sec:degrees}

The following lemma is a refinement of \citep[Proposition~3.3.4, p. 75]{SzantoThesis}.
\begin{lemma} \label{lem:boundTheStrucConstant}
	Let $\Delta = (g_1, \ldots, g_m)$ be a squarefree regular chain such that $\height(g_s) \leq d$ for all $s$.  Suppose that for all $1 \leq s \leq m$ that
    \begin{enumerate}[itemsep=2pt]
    \item
    $\lead(g_s) = x_{l+s}$;
    \item
    $\text{lc}(g_s) \in k[x_1, \ldots, x_l]$;
    \item
    $g_s$ is reduced modulo $\Delta_{s-1}=(g_1, \ldots, g_{s-1})$, i.e.
    $\forall t<s,\,\,\, \deg_{x_{l+t}}(g_s) < \deg_{x_{l+t}}(g_t).$
    \end{enumerate}
    Then the height $\Gamma(\Delta)$ of the matrix $M(\Delta)$ of structure constants of $A(\Delta)$ (see Definition~\ref{def:mult_table}) does not exceed 
    $$(d+2)^{m+1}(\log(d+2))^{m-1}.$$
\end{lemma}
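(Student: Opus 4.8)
The plan is to prove the bound by induction on $m$, the number of polynomials in the chain, tracking how the height of the structure constants grows when we pass from $A(\Delta_{m-1})$ to $A(\Delta_m)$ by adjoining $g_m$. Write $\Lambda := \Delta_{m-1} = (g_1,\ldots,g_{m-1})$ and set $\Gamma' := \Gamma(\Lambda)$, $d_m := \deg_{x_{l+m}}(g_m)$, so that the standard basis $B(\Delta)$ consists of monomials $x_{l+m}^j \cdot b$ with $0 \le j < d_m$ and $b \in B(\Lambda)$. A product of two such basis elements is $x_{l+m}^{j_1 + j_2} \cdot (b_1 b_2)$; first reduce $b_1 b_2$ using the multiplication table of $A(\Lambda)$, which costs at most $\Gamma'$ in height for the coefficients, and then reduce the power $x_{l+m}^{j_1+j_2}$ (whose exponent is at most $2d_m - 2 < 2d$) modulo $g_m$. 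The second reduction is governed by Lemma~\ref{lem:denominator} applied to the single-element chain $(g_m)$ over the coefficient ring $A(\Lambda)$: reducing $x_{l+m}^{t}$ modulo $g_m$ produces a remainder whose coordinates have denominators dividing $\lc(g_m)^{\alpha}$ with $\alpha \le t \le 2d$, and then Proposition~\ref{prop:HtProd} controls the height of each resulting coordinate in $B(\Delta)$.

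The key technical step is to set up Proposition~\ref{prop:HtProd} with the right parameters. The relevant $d'$ in that proposition is $\sum \beta_s \height(\lc(g_s)) \le \alpha \cdot \height(\lc(g_m)) \le 2d \cdot d = 2d^2$ (using $\height(\lc(g_m)) \le \height(g_m) \le d$), and the height input $d$ there should be taken as roughly $\Gamma'$ plus the height of the coefficients of $g_m$. Chaining the $k$-fold product bound $\height(a_1\cdots a_k) \le kd + k\log k\,(d' + \Gamma)$ over the reduction process, with $k$ on the order of $d_m \le d+1$, yields a recursion of the shape
\begin{equation*}
\Gamma(\Delta_m) \le c_1 d \cdot \Gamma(\Delta_{m-1}) + c_2 d^2 \log(d+2)
\end{equation*}
for suitable absolute constants, or more precisely a multiplicative recursion $\Gamma_m + E \le (d+2)\log(d+2)\,(\Gamma_{m-1} + E)$ after absorbing the additive term $E$. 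Unrolling this from the base case $\Gamma_0 = 0$ (the empty chain, whose algebra is just $k(x_1,\ldots,x_l)$) gives $\Gamma_m \le \bigl((d+2)\log(d+2)\bigr)^{m}$ up to lower-order adjustments; a slightly more careful bookkeeping of the polynomial-in-$d$ factors at each step produces exactly the claimed $(d+2)^{m+1}(\log(d+2))^{m-1}$.

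The main obstacle I expect is precisely this bookkeeping: Proposition~\ref{prop:HtProd} mixes a purely linear-in-$d$ term ($kd$) with a term carrying the $\log k$ factor, and the denominator exponents $\alpha$ from Lemma~\ref{lem:denominator} feed into $d'$, which in turn sits inside the $\Gamma$-dependent term. One has to be careful that the $\log$ factors accumulate only $m-1$ times rather than $m$ times (which is why the exponent on $\log(d+2)$ is $m-1$, not $m$), and that the extra power of $d$ in the $(d+2)^{m+1}$ factor — one more than the number of inductive steps — comes from the base-step contribution together with the $d$ in the $kd$ term at each level. I would organize the induction so that the statement being induced is already in the closed form $(d+2)^{m+1}(\log(d+2))^{m-1}$, verify it directly for $m = 1$ (where $A(\Delta_1) = k(x_1,\ldots,x_l)[x_{l+1}]/(g_1)$ and the structure constants are the coordinates of $x_{l+1}^{i+j}$ mod $g_1$, bounded via Lemma~\ref{lem:denominator} and Proposition~\ref{prop:HtProd} by something comfortably below $(d+2)^2$), and then check that the recursion established above is consistent with the closed form at the inductive step.
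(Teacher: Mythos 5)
Your overall strategy---induct on $m$, bound the growth $\Gamma(\Delta_{m-1})\to\Gamma(\Delta_m)$ via Proposition~\ref{prop:HtProd}, and unroll a recursion of the form $\Gamma_m \le (d+2)\log(d+2)\,\Gamma_{m-1} + (\text{additive term})$---is exactly the paper's. But the place you flag as ``bookkeeping'' is where your setup actually breaks. You reduce $b_1b_2$ in $A(\Lambda)$ first and only then reduce the power $x_{l+m}^{j_1+j_2}$ modulo $g_m$ via Lemma~\ref{lem:denominator}; this forces a denominator $\lc(g_m)^{\alpha}$ with $\alpha$ of order $d$, hence $d'$ of order $d^2$ in Proposition~\ref{prop:HtProd}. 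That $d'$ sits inside the $k\log k\,(d'+\Gamma)$ term with $k$ of order $d$, so each inductive step contributes an additive term of order $d^3\log d$. Unrolling then gives roughly $c\,d^2\bigl((d+2)\log(d+2)\bigr)^{m-1}$ with $c\ge 3$, and $c\,d^2 \le (d+2)^2$ fails already for $d\ge 3$; the claimed constant $(d+2)^{m+1}(\log(d+2))^{m-1}$ does not come out of your parameters, and your assertion that ``slightly more careful bookkeeping produces exactly the claimed bound'' is precisely the unproved step.

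The missing idea is the order of reductions. The paper first pseudo-divides the monomial $x_{l+1}^{e_1}\cdots x_{l+m}^{e_m}$ by $g_m$ using the adjugate-matrix formula $\mathbf{r}=g_d^d\mathbf{f}^{\llower}-G_0\adj(G_d)\mathbf{f}^{\uupper}$ from the Appendix. Because $g_m$ is reduced modulo $\Delta_{m-1}$, every entry of $G_0\adj(G_d)$ is a sum of products of at most $d_m$ coefficients of $g_m$, each a \emph{reduced integral} element of $A(\Delta_{m-1})$, and the remaining factor $x_{l+1}^{e_1}\cdots x_{l+m-1}^{e_{m-1}}$ splits into two more reduced integral elements. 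Since all $d_m+2$ factors are integral, one may take $d'=0$ in Proposition~\ref{prop:HtProd}, and the recursion becomes $\Gamma_m\le(d_m+2)d+(d_m+2)\log(d_m+2)\Gamma_{m-1}$, whose additive term $(d+2)^2$ is exactly what the geometric series in the closed form can absorb (together with the base case $\Gamma_1\le d^2$, which also explains why only $m-1$ logarithmic factors accumulate). Without this integrality observation your recursion is correct only up to a polynomial-in-$d$ loss, which is enough for a bound of the shape $(Cd\log d)^{O(m)}$ but not for the stated inequality.
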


\begin{proof}
We first apply the matrix description of the pseudoremainder (see~\nameref{sec:appendix}) to products of the form $x_{l+1}^{e_1}x_{l+2}^{e_2}\dots x_{l+m}^{e_m}$, where $e_s \leq 2d_s-2$.  Note that these products are the ones considered in computing the structure constants for $A(\Delta)$ and that such a product will play the role of what we call $f$ in~\nameref{sec:appendix}.  
Also, what we called $g$ in the~\nameref{sec:appendix} will be $g_m$ in our application, as that is the first element we pseudo-divide by in reducing by $\Delta$.
We have two cases to consider: $e_m < d_m$ and $e_m \geq d_m$.

In the first case, the product of interest is already reduced modulo $g_m$ and so can itself be selected as the pseudoremainder by $g_m$.  So we can bound the height of its pseudoremainder by $\Delta$ by taking the maximum of $\Gamma_{m-1}:=\Gamma(\Delta_{m-1})$ and $d_m$.

In the second case, what we denote by $\mathbf{f}^{\llower}$ in the~\nameref{sec:appendix} is here a column vector with every entry 0 and what we denote by $\mathbf{f}^{\uupper}$ has exactly one nonzero entry, namely $x_{l+1}^{e_1}x_{l+2}^{e_2}\dots x_{l+m-1}^{e_{m-1}}$.

We first inspect the $G_0\cdot \adj(G_d)$ part of the pseudoremainder expression.  In computing this product, one will obtain a $d_m \times d_m$ matrix and each of its entries will be sum of products of at most $1+(d_m-1) = d_m$ reduced integral elements of $A(\Delta_{m-1})$.  (Note that we have products of reduced integral elements of $A(\Delta_{m-1})$ because $g_m$ is assumed to be reduced modulo $\Delta_{m-1}$.)

Completing the analysis of the number of multiplications needed to compute the pseudoremainder by $g_m$, we note that the product $x_{l+1}^{e_1}x_{l+2}^{e_2}\dots x_{l+m-1}^{e_{m-1}}$ can be split into two factors where the exponent of each $x_{l+s}$ is less than $d_s$ (because $e_s \leq 2d_s-2$).  So multiplying $G_0 \cdot \adj(G_d)$ by the column vector  $\mathbf{f}^{\uupper}$  
results in sums of products of at most $d_m+2$ reduced integral elements of $A(\Delta_{m-1})$.

	So by Proposition \ref{prop:HtProd} we have 
    $$\Gamma_m \leq (d_m+2)\cdot d+(d_m+2)\log(d_m+2)\cdot \Gamma_{m-1}.$$
We first replace $d_m$ by $d$ and estimate the first term as $(d+2)^2$ to obtain
$$\Gamma_s < (d+2)^2+(d+2)\log(d+2)\cdot \Gamma_{s-1},\;\; s = 2, \ldots, m.$$
Combining these inequalities, we have

$$\Gamma_m \leq \left[ (d+2)^2\cdot\sum_{k=0}^{m-2} {\left( (d+2)\log(d+2) \right)^k} \right] +\left( (d+2)\log(d+2) \right)^{m-1}\Gamma_1.$$

Since the sum in brackets is a finite geometric series with $m - 1$ terms and $\Gamma_1 \leq d^2$, we have
$$\Gamma_m \leq (d+2)^2\left( \frac{((d+2)\log(d+2))^{m-1}-1}{(d+2)\log(d+2)-1} \right) +((d+2)\log(d+2))^{m-1}\cdot d^2.$$
So we obtain $\Gamma_m \leq (d+2)^{m+1}(\log(d+2))^{m-1}$.
\end{proof}


\begin{theorem} \label{thm:outputs}
Let $\Delta = (g_1,\ldots,g_m) \subset k[x_1,\ldots,x_n]$ be a regular chain of height at most $d$ ($d > 1$). Let $l:=n-m$, and assume that the following conditions are satisfied for every $s=1,\ldots,m$:
\begin{enumerate}[itemsep=2pt]
\item $\lead(g_s) =x_{l+s}$,
\item $\text{lc}(g_s) \in k[x_1,\ldots,x_l]$,
\item $g_s$ is reduced modulo $\Delta_{s-1} = (g_1,\ldots,g_{s-1})$.
\end{enumerate}
Let $M(\Delta)$ be the multiplication table for the algebra $A(\Delta)$. For $f,h \in A(\Delta)[x_{n+1},\ldots,x_{n+c}]$, denote $d_f:=\height(f)$ and $d_h:=\height(h)$.
Then for each polynomial $p$ occurring in the computation of $\unmixed_m^l(\Delta,M(\Delta),f,h)$ (see Algorithm~\ref{alg:Unmixed_m_l}), we have:
\[\height(p) \leq 5.2 \cdot 242^m (d^2+2d)^m d^{\frac{1}{2}m(m+1)} \left( \text{max} \{d,d_f,d_h\}+7(d+2)^m[\log (d+2)]^{m-1}\right) \log d.\]
\end{theorem}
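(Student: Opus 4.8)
The plan is to prove the bound by induction on $m$, following the recursive structure of Algorithm~\ref{alg:Unmixed_m_l}. Write $H_m(d,d_f,d_h)$ for the claimed right‑hand side. For $m=0$ the statement is vacuous, and for $m=1$ it follows from a direct inspection of the univariate gcd manipulations~\eqref{eq:univ}, whose outputs have height $\mathrm{O}(d)\max\{d,d_f,d_h\}$, comfortably below $H_1$. For the inductive step I would isolate the three places in Algorithm~\ref{alg:Unmixed_m_l} where a new polynomial is produced: step~(b), the multiplication table $M(\Lambda)$; step~\ref{alg:unmixed_three}, the pairs $\phi_{i,\mathbf v},\psi_{i,\mathbf v}$; and step~\ref{out1}--\ref{out2}, the polynomials $q_{i,\mathbf v}$ obtained from~\eqref{eq:ggcd}. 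The recursive call in step~\ref{alg:unmixed_four}\,(i) is then absorbed by the induction hypothesis, since $\Lambda=\Delta_{m-1}$ is again a regular chain of height $\leq d$ satisfying the structural hypotheses, and by Lemma~\ref{lem:boundTheStrucConstant} every $\Gamma(\Delta_s)$, $s\leq m$, is at most $(d+2)^{s+1}(\log(d+2))^{s-1}$, which disposes of (b).

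For step~\ref{alg:unmixed_three} I would use the explicit formulas for $\phi_{i,\mathbf v},\psi_{i,\mathbf v}$ recorded in Lemma~\ref{lem:Input(s)}: each is built from $f$, $h$, $g_m$ and the $x_n$‑derivatives of $g_m$ by forming subresultant coefficients — determinants of matrices of size $\mathrm{O}(\deg_{x_n}g_m)=\mathrm{O}(d)$ with entries in $A(\Delta_{m-1})$ of height $\leq\max\{d,d_f,d_h\}$ — together with a bounded number of products and inversions in $A(\Delta_{m-1})$, whose $k(x_1,\dots,x_l)$‑dimension is $\leq d^{m-1}$. Proposition~\ref{prop:HtProd}, used with the $\Gamma$‑value $\Gamma(\Delta_{m-1})$, then bounds $\height(\phi_{i,\mathbf v})$ and $\height(\psi_{i,\mathbf v})$ by a quantity $\delta'$ of the form $c\,(d^2+2d)\,d^{m}\big(\max\{d,d_f,d_h\}+(\log d)\,\Gamma(\Delta_{m-1})\big)$. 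Applying the induction hypothesis to $\unmixed_{m-1}^{l}(\Lambda,M(\Lambda),\phi_{i,\mathbf v},\psi_{i,\mathbf v})$ shows that every polynomial produced by the recursive call — in particular every entry of every output chain $\Lambda_{i,\mathbf v}$ — has height at most $H_{m-1}(d,\delta',\delta')$.

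The heart of the argument is step~\ref{out1}--\ref{out2}. Here I would invoke the description of the generalized gcd in~\citep[Lemma~3.1.3]{SzantoThesis}: each of the six gcds in~\eqref{eq:ggcd} is the normalized subresultant of $g_m$, its derivatives, $f$ and $h$ of the index dictated by $\mathbf v$, so its height is again bounded via Proposition~\ref{prop:HtProd} and Lemma~\ref{lem:denominator}, the normalization costing a single inversion in $A(\Delta_{m-1})$ (hence a factor $\leq d^{m-1}$). The numerator and denominator of~\eqref{eq:ggcd} are products of a bounded number of such gcds, all of $x_n$‑degree $\leq d$; the splitting performed by the recursion guarantees that on every relevant prime component the denominator divides the numerator, and since the factors are monic in $x_n$ the exact division introduces no new leading‑coefficient denominators, so Proposition~\ref{prop:HtProd} bounds $\height(q_{i,\mathbf v})$ by a quantity of the same shape as $\delta'$. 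Taking the maximum of the estimates from (b), \ref{alg:unmixed_three}, \ref{out1}--\ref{out2} and the induction hypothesis yields a recurrence in which passing from level $s$ to level $s-1$ multiplies the governing height by a factor essentially $242\,(d^2+2d)\,d^{\,s}$ and adds a term $\mathrm{O}\!\big((d+2)^{s}(\log(d+2))^{s-1}\big)$; unrolling it down to $m=1$ telescopes the multiplicative factors into $242^{m}(d^2+2d)^{m}d^{\frac12 m(m+1)}$, collapses the additive contributions into the summand $7(d+2)^{m}(\log(d+2))^{m-1}$, and absorbs the leftover numerical constants and the $\mathrm{O}(\log d)$ from the subresultant sizes into the prefactor $5.2\log d$.

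The main obstacle I anticipate is precisely step~\ref{out1}--\ref{out2}: one must (i) be certain that formula~\eqref{eq:ggcd} is legitimate — that the recursion has refined $\Rep(\Lambda)$ finely enough that all six generalized gcds are well-defined, of the degrees prescribed by $\mathbf v$, and that the denominator divides the numerator on every component on which these hold — and (ii) track the height growth through the subresultant remainder sequence and the final exact division tightly enough that each recursion level costs only the factor $d^{\mathrm{O}(s)}$ coming from one inversion in $A(\Delta_{s-1})$ rather than anything larger. It is this per‑level $d^{\mathrm{O}(s)}$, summed over $s=1,\dots,m$, that produces the exponent $\tfrac12 m(m+1)$; verifying it, and pinning down the explicit constants $242$, $5.2$ and $7$ so that the inequality $H_{m-1}(d,\delta',\delta')\leq H_m(d,d_f,d_h)$ and the companion comparisons for $\delta'$, $\height(q_{i,\mathbf v})$ and $\Gamma(\Delta_{m-1})$ actually close, is where essentially all of the work lies.
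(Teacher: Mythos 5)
Your proposal follows essentially the same route as the paper's proof: a descending pass bounding the inputs $\phi_{i,\mathbf v},\psi_{i,\mathbf v}$ at each recursion level via the subresultant/Sylvester-matrix construction together with Proposition~\ref{prop:HtProd} and Lemma~\ref{lem:denominator} (this is the paper's Lemma~\ref{lem:Input(s)}), followed by an ascending pass bounding the ggcds, their normalizations, the pseudo-division producing $q_{i,\mathbf v}$, and the new multiplication table, with the resulting two-variable recurrence unrolled into the stated product of per-level factors. The one imprecision worth noting is that the normalization and the division in~\eqref{eq:ggcd} at level $m$ are performed modulo the \emph{output} chain $\Lambda_{i,\mathbf v}$ of the recursive call (via a pseudo-inverse given by a $D_{m-1}\times D_{m-1}$ determinant), so $\height(q_{i,\mathbf v})$ is governed by $16dD_{m-1}$ times the inductive bound $H_{m-1}(d,\delta',\delta')$ — not by ``a quantity of the same shape as $\delta'$'' as you write — but the multiplicative recurrence you then set up is the correct one, so this self-corrects.
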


\begin{proof}
Since for the case $m = 1$ unmixed representation can be obtained simply by taking square-free part of the corresponding polynomial (see~\citep[p. 124]{SzantoThesis}), in what follows we assume that $m > 1$.
Let 
\[\left \{ (\Delta_1,M(\Delta_1)),\ldots,(\Delta_r,M(\Delta_r)) \right \}:=\unmixed_m^l(\Delta,M(\Delta),f,h)\]
be the output of the algorithm $\unmixed_m^l$ applied to $(\Delta,M(\Delta),f,h)$. 
Assume that $\Delta_j=(g_{1,j},\ldots,g_{m,j})$ for $j=1,\ldots,r$. For each $s=1,\ldots,m$, we denote
\begin{equation}\label{eq:d_tilde_def}
\tilde{d}_s:=\max \left \{ \deg_{x_{l+s}} \left( g_{s,j} \right) \,|\, j=1,\ldots,r \right \}.
\end{equation}

The computation of $\unmixed_m^l$ has a tree structure. 
Consider a path of the computation tree with successive recursive calls:
\[\unmixed_m^l(\Delta_m,M(\Delta_m),f_m,h_m),\ldots,\unmixed_0^l(\Delta_0,M(\Delta_0),f_0,h_0)\]
where $f_m=f$, $h_m=h$ and $f_s$ and $h_s$ are computed from $(\Delta_{s+1},M(\Delta_{s+1}),f_{s+1},h_{s+1})$ for each $s=0,\ldots,m-1$ as described in Step~\ref{alg:unmixed_three} of Algorithm~\ref{alg:Unmixed_m_l} and \citep[p. 128]{SzantoThesis}. 
First we estimate the height of the input at each level.

\begin{lemma} \label{lem:Input(s)}
Let $\Input(s):=\max \{d,\height(f_s),\height(h_s)\}$ for every $s=0,\ldots,m$. Then 
\[\Input(s) \leq (6d)^{m-s} \left( \Input(m) + 7(d+2)^m(\log (d+2))^{m-1} \right).\]
\end{lemma}

\begin{proof}
We give an inductive analysis to obtain a bound on $\Input(s)$.  
For $s=m$, there is nothing to do.  
So we start with $s = m - 1$ and consider the heights of $f_{m-1}, h_{m-1}$.
Computation of these polynomials from the data of level $m$ in Step~\ref{alg:unmixed_three} of Algorithm~\ref{alg:Unmixed_m_l} can be summarized as follows (see also~\citep[p.~127-128]{SzantoThesis}):
\begin{enumerate}
	\item Compute the $j$-th sub-resultants
	$$
	\varphi_k^{(j)}(y,z):=\Res_{x_n}^{(j)} \left( g_m, f_m+\sum_{l=1}^k 	g_m^{(l)}y^{l-1}+z h_m \right),
	$$
	for $1 \leq k \leq d$ and $0 \leq j \leq d$. Here $y, z$ are new variables (i.e. different from the ones which $g_m, f_m, h_m$ are polynomials in).
	\item For each $1 \leq i \leq d$ and $\mathbf{v} = (v_1,\ldots,v_6) \in \mathbb{Z}_{\geqslant 0}^6$, where $0 \leq v_t \leq d$ for $1 \leq t \leq 6$, 
	\begin{enumerate}
		\item define the polynomial $\phi_{i,\mathbf{v}}(y,z,w)$ to be a linear combination of polynomials 
        \[
        \varphi_{i-1}^{(u_1)}(y,0),\;\; \varphi_{i}^{(u_2)}(y,0),\;\; \varphi_{i+1}^{(u_3)}(y,0),\;\; \varphi_{i-1}^{(u_4)}(y,z),\;\; \varphi_{i}^{(u_5)}(y,z),\;\; \varphi_{i+1}^{(u_6)}(y,z)
        \]
        for all 
        $u_1, \ldots, u_6$ 
        such that $u_i < v_i$ for $1 \leq i \leq 6$
        by using the powers of a new variable $w$.
		\item define 
        \[
        \psi_{i, \mathbf{v}}(y,z):=\varphi_{i-1}^{(v_1)}(y,0) \cdot \varphi_{i}^{(v_2)}(y,0) \cdot \varphi_{i+1}^{(v_3)}(y,0) \cdot \varphi_{i-1}^{(v_4)}(y,z) \cdot \varphi_{i}^{(v_5)}(y,z) \cdot \varphi_{i+1}^{(v_6)}(y,z).
        \]
		\item reduce $\phi_{i,\mathbf{v}}$ and $\psi_{i,\mathbf{v}}$ with respect to $\Lambda$.
        \item Set $f_{m - 1} := \phi_{i, \mathbf{v}}$ and $h_{m - 1} := \psi_{i,\mathbf{v}}$ for this choice of $i, \mathbf{v}$.
	\end{enumerate}
\end{enumerate}

Note that new variables $y,z$ and $w$ were introduced. 
In Algorithm~\ref{alg:Unmixed_m_l}, all new introduced variables are denoted by $x_{n+1},\ldots,x_{n+c}$.
Here we use names $y$, $z$, and $w$ for notational simplicity.

In order to bound the heights of $f_{m - 1}$ and $h_{m - 1}$, 
we bound the heights of the subresultants $\varphi_k^{(j)}(y,z)$.  
In the computation of a bound for the heights of the subresultants, the largest bound will be a bound for the $0$-th subresultant,
because higher ones are obtained by deleting rows and columns of the Sylvester matrix, whose determinant produces the $0$-th subresultant.

Since we are taking subresultants with respect to $x_n$, all the entries of the Sylvester matrix are polynomials in $x_1, x_2, \ldots, x_{n-1}$.  
In particular, this means that their degrees in $x_{l+i}$ are less than $d_i$ for all $1 \leq i < m$.
Size of this matrix is at most $d_m+d_m = 2d_m$.  
The first $d_m$ is because $\deg_{x_n} g_m = d_m$.  
The second $d_m$ is because $f, h$ are reduced with respect to $\Delta$.  

Since $f_{m-1}, h_{m-1}$ must be reduced modulo $\Delta_{m-1}$, we will carrying out all operations in $A(\Delta_{m-1})$.
One can see that the bound for the height of $h_{m-1}$ that we will obtain is larger than a similar computation would produce for $f_{m-1}$.  
So we focus on getting a bound for the height of $h_{m-1}$, thereby obtaining a bound for $\Input(m-1)$.  
In fact, our technique will give us a bound for $\Input(s)$ in terms of $\Input(s+1)$.

Since the computation of $h_{m-1}$ involves a multiplication of six evaluated subresultants, we apply Proposition~\ref{prop:HtProd} to the sixth power of the $0$th subresultant (as described above) in two stages:
\begin{enumerate}[noitemsep, topsep=0pt]
\item
For the first stage, note that each term of the sixth power of the 0-th subresultant is a product of $12d_m$ factors.  
We split these up into two groups: the $6d_m$ factors of any term coming from the  coefficients of $g_m$ (call the product of these $C$) and the rest from the coefficients of $f+\sum_{l=1}^k g_m^{(l)}y^{l-1}+zh$ (call the product of these $D$).  
In this first stage, we need not worry about denominator bounds because all of the factors of $C$ and $D$ are integral elements of $A(\Delta)$.
\item
We then take these two groups of $6d_m$ factors, reduce them, and multiply them.  
In the reduction step, we obtain some denominators in general and so we will need to compute bounds on these.
\end{enumerate}
Assume that the heights of denominators of $C$ and $D$ are bounded by $d'$.
Our two-step analysis of the height of $CD$ using Proposition~\ref{prop:HtProd} yields:
\begin{align*}
\height(CD) & \leq \height(C) + \height(D) + 2\log(2)\cdot (\Gamma(\Delta_{m-1})+d')\\
  & \leq 6d_m \cdot d + 6d_m \cdot \Input(m) + 12d_m\log(6d_m) \cdot \Gamma({\Delta_{m-1}}) + 2\cdot (\Gamma(\Delta_{m-1})+d')\\
  & \leq 6d^2 + 6d \cdot \Input(m) + 12d\log(6d) \cdot \Gamma({\Delta_{m-1}}) + 2\cdot (\Gamma(\Delta_{m-1})+d').
\end{align*}

We need to bound $d'$ by considering the sequence of exponents we obtain on $\text{lc}(g_i)$ when reducing $C, D$ modulo $\Delta_{m-1}$. 
Applying Lemma \ref{lem:denominator} with $\height(C) \leq 6d^2=:t$, we have
\begin{align*}
d' \leq \sum\limits_{i=1}^{m-1} {6d^2 (d+1)^{m-1-i} \cdot d} = 6d^2 (d+1)^{m-1}-6d^2.
\end{align*}

\noindent
Therefore
\begin{multline*}
\height(h_{m-1})  \leq 6d^2 + 6d \cdot \Input(m) + 12d\log(6d) \cdot \Gamma({\Delta_{m-1}}) + \\ + 2\cdot \left(\Gamma(\Delta_{m-1})+6d^2(d+1)^{m-1} - 6d^2\right).
\end{multline*}

\noindent
As a result, we have
\begin{align*}
\Input(m-1) \leq \Gamma({\Delta_{m-1}}) \cdot (12d\log(6d)+2) + 6d \cdot \Input(m) + 12d^2(d+1)^{m-1}.
\end{align*}

\noindent
Moreover, we can obtain a bound for $\Input(s)$ in term of $\Input(s+1)$ in a similar way. 
In particular, we have
$$\Input(s) \leq \Gamma({\Delta_{s}}) \cdot (12d\log(6d)+2) + 6d \cdot \Input(s+1) + 12d^2(d+1)^{s}$$
for every $s=0,\ldots,m-1$. 
Due to Lemma~\ref{lem:boundTheStrucConstant} 
\[
\Gamma({\Delta_{s}}) \leq (d+2)^{s+1} (\log(d+2))^{s-1}.
\]
Using $d \geq 2$, it can be shown that
\begin{align*}
\frac{12d\log(6d)+2}{(d+2)\log(d+2)} \leq 17 \hspace{.5cm} \text{and \hspace{.2cm}} \frac{12d^2}{(d+1)^2} \leq 12.
\end{align*}
We therefore modify our recursive bound and obtain
\begin{align*}
\Input(s) \leq
17 \cdot (d+2)^{s+2} (\log(d+2))^{s} + 6d \cdot \Input(s+1) + 12(d+1)^{s+2}
\end{align*}
for $s = 0, 1, \ldots, m-1$. 
Thus, $\Input(s)$ does not exceed
\[
(6d)^{m-s} \cdot \Input(m) +17\cdot\sum_{k=0}^{m-s-1} (6d)^k(d+2)^{s+k+2}(\log(d+2))^{s+k} + 12\cdot \sum_{k=0}^{m-s-1} (6d)^k(d+1)^{s+k+2}.
\]
\noindent
Using the formula for geometric series and $d \geq 2$, we can deduce that 
$$
\Input(s) \leq (6d)^{m-s}\left( \Input(m) + 6(d+2)^m(\log(d+2))^{m-1} + 3.1(d+1)^m\right).
$$
Using $d, m \geq 2$ we can further show that $3.1(d + 1)^m \leq (d+2)^m(\log(d+2))^{m-1}$, so the above expression is bounded by \[
(6d)^{m-s}\left(\Input(m) + 7(d+2)^m(\log(d+2))^{m-1}\right).\qedhere\]
\end{proof}

We return to the proof of Theorem \ref{thm:outputs}. 
Using the same notation as in~\citep[p.~141]{SzantoThesis}, we denote by $\Output(s)$ the maximum height of polynomials computed up to level $s$. 
For example, if $s=0$, we have $\Output(0)=\Input (0)$. 

We are going to derive an upper bound for $\Output (m)$ recursively. 
Assume that we have determined $\Output (m-1)$ which is an upper bound for all polynomials computed up to level $m - 1$. 
Let $i \leq d$ and $\mathbf{v} \in \mathbb{Z}_{\geq 0}^6$ such that $0 \leq v_t \leq d$ for every $t = 1, 2, \ldots, 6$. Let $\left( \Lambda_{i,\mathbf{v}}, M(\Lambda_{i,\mathbf{v}}) \right)$ be an arbitrary output after the recursive call at level $m - 1$ for these $i$ and $\mathbf{v}$ (see Steps~\ref{alg:unmixed_three} and~\ref{alg_four} of Algorithm~\ref{alg:Unmixed_m_l}). 
The construction of the corresponding output $\left( \Lambda_{i,\mathbf{v}} \cup \{q_{i, \mathbf{v}}\} ,M(\Lambda_{i,\mathbf{v}} \cup \{q_{i, \mathbf{v}}\}) \right)$ from Step~\ref{alg_four} of Algorithm~\ref{alg:Unmixed_m_l} (see also~\citep[p. 129]{SzantoThesis}) is the following
\begin{enumerate}[itemsep=2pt]
	\item\label{step_one} 
	Compute $d_{t,i,v_t}, \, 1 \leq t \leq 6,$ defined by (see \citep[p. 127]{SzantoThesis})
	\begin{align*}
	&d_{1,i,v_1}:=\ggcd_{x_n} \left(\Lambda_{i,\mathbf{v}} \cup \{g_m\},g'_m,\ldots,g_m^{(i-1)},f_m\right)\\
	&d_{2,i,v_2}:=\ggcd_{x_n} \left(\Lambda_{i,\mathbf{v}} \cup \{g_m\},g'_m,\ldots,g_m^{(i)},f_m\right)\\
	&d_{3,i,v_3}:=\ggcd_{x_n} \left(\Lambda_{i,\mathbf{v}} \cup \{g_m\},g'_m,\ldots,g_m^{(i+1)},f_m\right)\\
	&d_{4,i,v_4}:=\ggcd_{x_n} \left(\Lambda_{i,\mathbf{v}} \cup \{g_m\},g'_m,\ldots,g_m^{(i-1)},f_m,h_m \right)\\
	&d_{5,i,v_5}:=\ggcd_{x_n} \left(\Lambda_{i,\mathbf{v}} \cup \{g_m\},g'_m,\ldots,g_m^{(i)},f_m,h_m \right)\\
	&d_{6,i,v_6}:=\ggcd_{x_n} \left(\Lambda_{i,\mathbf{v}} \cup \{g_m\},g'_m,\ldots,g_m^{(i+1)},f_m,h_m \right)
	\end{align*}
	Generalized gcd ($\ggcd$) is described in \citep[Lemma~3.1.3]{SzantoThesis}.
	
    \item\label{step_two} Compute 
    \[
    \overline{\text{lc} (d_{t,i,v_t})}:= \pinvert_{m - }^l \left( \Lambda_{i,\mathbf{v}},M(\Lambda_{i,\mathbf{v}}), \text{lc} (d_{t,i,v_t}) \right) \text{ for } 1\leq t \leq 6,
    \] 
    where the function $\pinvert^l_m(\Delta, M(\Delta), f)$ for computing the pseudo-inverse of $f$ has the following specification (see also~\citep[Section~3.4]{SzantoThesis})
    \vspace{2mm}
    \begin{description}
      \item[In] 
      \begin{itemize}
        \item[$\Delta$:] a squarefree regular chain in $k[x_1, \ldots, x_{l + m}]$, where $x_{l + 1}, \ldots, x_{l + m}$ are the leaders of $\Delta$;
        \item[$M(\Delta)$:] the multiplication table of $A(\Delta)$ (see Definition~\ref{def:mult_table});
        \item[$f$:] a polynomial in $k[x_1, \ldots, x_{l + m}]$ such that $f \not\in P$ for every $P \in \Ap(\Delta)$;
      \end{itemize}
      \item[Out] $\bar{f} \in k[x_1, \ldots, x_{m + l}]$ such that 
      $\bar{f} \cdot \bar{f} \equiv r \pmod{\Rep(\Delta)}$, where $r \in k[x_1, \ldots, x_l] \setminus \{0\}$.
    \end{description}
    \vspace{2mm}
	
    \item\label{step_three} Compute $\overline{d}_{t,i,v_t}:=\overline{\text{lc} (d_{t,i,v_t})} \cdot d_{t,i,v_t}$ for $1 \leq t \leq 6$.
	
    \item\label{step_four} Compute 
    \[
    p_{i,\mathbf{v}}^{(1)}:=\overline{d}_{1,i,v_1} \cdot \overline{d}_{3,i,v_3} \cdot \overline{d}_{5,i,v_5}^2 \;\;\text{ and } \;\; p_{i,\mathbf{v}}^{(2)}:=\overline{d}_{2,i,v_2}^2 \cdot \overline{d}_{4,i,v_4} \cdot \overline{d}_{6,i,v_6},\] 
    and then $q_{i,\mathbf{v}}$, the result of the pseudo-division $p_{i,\mathbf{v}}^{(1)}$ by $p_{i,\mathbf{v}}^{(2)}$.
	
    \item\label{step_five} Compute the multiplication table $M(\Lambda_{i,\mathbf{v}} \cup \{q_{i, \mathbf{v}}\})$.
\end{enumerate}

We are going to bound the heights of the polynomials appearing in each step.

\hyperref[step_one]{Step~1.} The construction of $\ggcd$ in \citep[Lemma~3.1.3]{SzantoThesis} implies that $\height (d_{t,i,v_t}) \leq \Input (m-1)$ for every $t=1,\ldots,6$.

\hyperref[step_two]{Step~2.} We denote by $D_{m-1}$ the dimension of the algebra $A(\Delta)$ over $k$. 
Then $D_{m-1}=\prod\limits_{i=1}^{m-1} \tilde{d}_i$ (see~\eqref{eq:d_tilde_def}). 
The coefficients of $\overline{\text{lc} (d_{t,i,v_t})}$ are defined as the determinants of matrices of size $D_{m-1} \times D_{m-1}$ (see~\citep[p. 84]{SzantoThesis}). 
Every such matrix has a column of the form $[0,\ldots,0,1]^t$, 
and the entries of the matrix have the height at most 
\[\height (d_{t,i,v_t})+\Gamma (\Lambda_{i,\mathbf{v}}) \leq \Input (m-1)+ \Output (m-1).\]
Therefore
\[\height(\overline{\text{lc} (d_{t,i,v_t})}) \leq (D_{m-1}-1) (\Input (m-1)+ \Output (m-1)).\]

\hyperref[step_three]{Step~3.} Now we compute $\overline{d}_{t,i,v_t}:=\overline{\text{lc} (d_{t,i,v_t})} \cdot d_{t,i,v_t}$. Applying \citep[Proposition~3.3.1, p. 66]{SzantoThesis}, we have
\begin{align*}
 \height (\overline{d}_{t,i,v_t}) & \leq \height (\overline{\text{lc} (d_{t,i,v_t})}) + \height (d_{t,i,v_t})+ 2 \log 2 \cdot \Gamma(\Lambda_{i,\mathbf{v}})&\\
& = D_{m-1} \Input (m-1) + (D_{m-1}+1) \Output (m-1).&
\end{align*}

\hyperref[step_four]{Step~4.} Note that, for each $t=1,\ldots,6$, we have $\deg_{x_n} \overline{d}_{t,i,v_t} = \deg_{x_n} (d_{t,i,v_t}) \leq d$.
Therefore $p_{i,\mathbf{v}}^{(1)}$ and $p_{i,\mathbf{v}}^{(2)}$ are polynomials of degree at most $4d$ in $x_n$. 
By using the matrix representation for the quotient of the pseudo-division algorithm, the coefficients of $q_{i,v}$ are equal to a sum of products of at most $4d$ coefficients of $p_{i,\mathbf{v}}^{(1)}$ or $p_{i,\mathbf{v}}^{(2)}$. 
Each coefficient of $p_{i,\mathbf{v}}^{(1)}$ and $p_{i,\mathbf{v}}^{(2)}$ is a sum of products of $4$ coefficients of $\overline{d}_{t,i,v_t},\, t=1,\ldots,6$. 
Thus, coefficients of $q_{i,\mathbf{v}}$ are sums of products of at most $16d$ coefficients of $\overline{d}_{t,i,v_t},\, t=1,\ldots,6$. 
Note that $\overline{d}_{t,i,v_t}$ are polynomials and are reduced by $\Lambda_{i,\mathbf{v}}$. 
Applying \citep[Proposition~3.3.1, p. 66]{SzantoThesis}, we obtain 
 \begin{align*}
 \height (q_{i,\mathbf{v}}) & \leq 16d \cdot \max\limits_{t=1,\ldots,6} \{\height (\overline{d}_{t,i,v_t})\} + 16d \log (16d) \cdot \Gamma (\Lambda_{i,\mathbf{v}})\\
 & \leq \left( 16dD_{m-1}+16d+16d \log (16d) \right) \Output (m-1)+ 16dD_{m-1} \Input (m-1).
 \end{align*}

\hyperref[step_five]{Step~5.} 
As the last step of the computation at level $m$, we compute the multiplication table $M(\Delta_{i,\mathbf{v}})$ for the algebra $A(\Delta_{i,\mathbf{v}})$, where $\Delta_{i, \mathbf{v}} := \Lambda_{i, \mathbf{v}} \cup \{q_{i, \mathbf{v}}\}$. 
We already know that the height of any entry in the multiplication table $M(\Lambda_{i,\mathbf{v}})$ is at most $\Output (m-1)$. 
In order to obtain an upper bound for the heights of coefficients in $M(\Delta_{i,\mathbf{v}})$, we need to estimate the height of the remainder in the pseudo division of $x_{l+1}^{\alpha_1} \cdot \ldots \cdot x_n^{\alpha_m}$ by $q_{i,\mathbf{v}}$, where $0 \leq \alpha_s \leq 2 \deg_{x_{l+s}}(g_{s})-2,\, 1 \leq s \leq m$. 
Note that $q_{i,\mathbf{v}}$ is reduced modulo $\Lambda_{i,\mathbf{v}}$, and that $\deg_{x_n} q_{i,\mathbf{v}} \leq \tilde{d}_m$. 
By using the matrix representation of the remainder in the pseudo-division algorithm (see~\nameref{sec:appendix}), the remainder obtained when we divide $x_{l+1}^{\alpha_1} \cdot \ldots \cdot x_n^{\alpha_m}$ by $q_{i,\mathbf{v}}$ is equal to a sum of products of at most $\tilde{d}_m+2$ integral elements in $A(\Lambda_{i,\mathbf{v}})$. 
Therefore,
 \[\Gamma (\Delta_{i,\mathbf{v}}) \leq (\tilde{d}_m+2) \height (q_{i,\mathbf{v}})+(\tilde{d}_m+2) \log (\tilde{d}_m+2) \Gamma (\Lambda_{i,\mathbf{v}}).\]

This is also an upper bound for all polynomials computed up to level $m$. In other words,
\begin{align*}
\Output(m) & \leq  (\tilde{d}_m+2) \left( 16dD_{m-1}+16d+16 d \log (16d) + \log (\tilde{d}_m+2) \right) \Output (m-1)+ \\ 
& \hspace{3cm} + 16 d D_{m-1}(\tilde{d}_m+2) \Input (m-1).
\end{align*}

We note that the computations are in the algebra $A(\Delta)$. Therefore we always have 
\begin{equation}\label{eq:bound_for_remark}
\tilde{d}_i \leq d \text{ for every } i = 1,\ldots,m.
\end{equation}
Thus $\Output(m)$ does not exceed
 \[
 (d+2)(16d^m+16d \log (32d)+\log (d+2)) \Output(m-1)  + \\ + 16(d+2)d^m \Input(m-1).
\]
A similar argument shows that $\Output(s)$ does not exceed
\begin{equation} \label{equ:Output(s)}
\Output(s) \leq (d+2)(16d^s+16d \log (32d)+\log (d+2)) \Output(s-1) + \\ + 16(d+2)d^s \Input(s-1)
\end{equation}
for every $s = 1,\ldots,m$. 
Lemma \ref{lem:Input(s)} implies that 
\[\Input(0) \leq I_0 := (6d)^{m} \left( \max\{d,d_f,d_h\}+11(d+2)^m(\log (d+2))^{m-1} \right)\]
and
\[\Input(s-1) \leq (6d)^{-s+1} I_0.\]
Using this notation in \eqref{equ:Output(s)}, we see that
\begin{equation}
(6^{s} \Output(s)) \leq C(s) (6^{s-1} \Output(s-1)) + 96d(d+2) I_0
\end{equation}
where
\begin{equation} \label{equ:C(s)}
C(s):=6(d+2)(16d^s+16d \log (32d)+\log (d+2)).
\end{equation}
Now we unfold this recursion and rewrite $6^m \Output(m)$ using $6^{m-1} \Output(m-1)$ and so on, we see that 
\begin{align} \label{equ:Output(m)*6^m}
6^m \Output(m) & \leq \left( \prod\limits_{s=1}^{m}{C(s)} \right) \cdot \Output(0) + 96 d(d+2) I_0 \sum\limits_{s=2}^{m}{\prod\limits_{i=s}^{m}{C(i)}} \nonumber \\
& = \left( \prod\limits_{s=1}^{m}{C(s)}+ 96 d(d+2) \sum\limits_{s=2}^{m}{\prod\limits_{i=s}^{m}{C(i)}}\right) \cdot I_0.
\end{align}

We simplify \eqref{equ:Output(m)*6^m} by applying Lemma~\ref{lem:C(s)}. In particular, we have:
\begin{align*}
6^m \Output(m) < 5.2 \cdot (242(d+2))^m \cdot d^{\frac{1}{2}m(m+1)} \cdot \log d \cdot I_0.
\end{align*}
The obtained inequality after canceling the factor $6^m$ from both sides is exactly the inequality we need to prove.
\end{proof}

\begin{theorem} \label{thm:degreeBound}
Let $F:=\{f_0,f_1,\ldots,f_r\} \subset k[x_1,\ldots,x_n]$ be a set of polynomials of degree at most $d$. Let $m$ be the maximum codimension of prime components of $\sqrt{( F )}$. 
Then the degree of any polynomial $p$ appearing in the output of Algorithm~\ref{alg:SzantoAlgorithm} applied to $F$ or during the computation does not exceed
\begin{equation} \label{inequa:DegreeBound}
B(m,d):= 5.2n \cdot 242^m (d^{2m}+2d^m)^m d^{\frac{1}{2}m^2(m+1)} \left( \max \{d^m,r\}+7(d^m+2)^m(\log (d^m+2))^{m-1}\right) \log d^m.
\end{equation}

In particular, in case $r$ is not too large, for instance if $r \leq d^m$, we have
\begin{align*}
\deg p \leq n d^{(\frac{1}{2}+\epsilon) m^3}
\end{align*}
where $\epsilon=\epsilon(m,d)$ is a decreasing function such that $\epsilon (m,d)<5$ for every $d \geq 2,\, m \geq 2$, and $\lim\limits_{m \to \infty} \epsilon(m, d) = 0$ for all $d$.
\end{theorem}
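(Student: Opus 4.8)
The plan is to derive Theorem~\ref{thm:degreeBound} from Theorem~\ref{thm:outputs} by tracking the parameters with which $\unmixed$ is invoked inside Algorithm~\ref{alg:SzantoAlgorithm}, and then simplifying the resulting expression. First I would bound the data fed to $\unmixed$ in step~\ref{alg_three}. By the generalized-resultant construction of \citep[Prop.~4.1.2, Cor.~4.1.5]{SzantoThesis}, each regular chain $\Delta_{\mathbf i}$ produced in step~\ref{alg_one} has height at most $d^m$ and, for the indices $\mathbf i$ that contribute components of $\sqrt{(F)}$ (equivalently $|\mathbf i| \geq n-m$), consists of at most $m$ polynomials; the remaining indices contribute no component of $\sqrt{(F)}$, and by the same construction the corresponding calls of $\unmixed$ involve no polynomial exceeding the bound we are proving. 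The input polynomial $f = \sum_{j=0}^r f_j x_{n+1}^j$ has $\deg_{x_{n+1}} f = r$ and $\deg f_j \leq d$, so $\height(f) \leq \max\{r, d\}$, while $h = 1$ has height $0$; reduction of $f$ modulo $\Delta_{\mathbf i}$ takes place inside the recursion and is already accounted for in Lemma~\ref{lem:Input(s)}, which proceeds from the unreduced input heights. The polynomials produced in steps~\ref{alg_one}--\ref{alg_two} themselves (generalized resultants, and the entries of the tables $M(\Delta_{\mathbf i})$, bounded via Lemma~\ref{lem:boundTheStrucConstant} with height parameter $d^m$) stay well below the bound coming from step~\ref{alg_three}.

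Next I would apply Theorem~\ref{thm:outputs} to each call $\unmixed_{|\Delta_{\mathbf i}|}^{|\mathbf i|}(\Delta_{\mathbf i}, M(\Delta_{\mathbf i}), f, 1)$. Since the bound there is monotone in the chain length and in $d, d_f, d_h$, substituting the chain height $d^m$ for the parameter $d$, the value $m$ for the chain length, $d_f = \max\{r,d\}$ and $d_h = 0$, and noting $\max\{d^m, d_f, d_h\} = \max\{d^m, r\}$, shows that every polynomial occurring during such a call has height at most $B(m,d)/n$. Passing from heights to total degrees costs a factor equal to the number of variables present --- which is $n + \mathrm{O}(m) = \mathrm{O}(n)$, since each level of the recursion eliminates one leader and introduces only the three auxiliary variables $y,z,w$ --- and absorbing this linear factor gives exactly inequality~\eqref{inequa:DegreeBound}.

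For the ``in particular'' clause, assume $r \leq d^m$, so that $\max\{d^m,r\} = d^m$, and read off the exponent of $d$ in $B(m,d)$. It is dominated by the contribution $\tfrac12 m^2(m+1) = \tfrac12 m^3 + \tfrac12 m^2$ of the factor $d^{\frac12 m^2(m+1)}$; the remaining factors $(d^{2m}+2d^m)^m$, $(d^m+2)^m(\log(d^m+2))^{m-1}$, $d^m$, $242^m$ and $5.2\log d^m$ add to that exponent at most $2m^2+\mathrm{O}(1)$, $m^2+\mathrm{O}(m\log(m\log d))$, $m$, $m\log_d 242$ and $\mathrm{O}(\log_d(m\log d))$ respectively, all of order strictly below $m^3$. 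Collecting everything below the leading $\tfrac12 m^3$ into a term $\epsilon(m,d)\,m^3$ and pulling the factor $n$ out front gives $\deg p \leq n\,d^{(\frac12+\epsilon)m^3}$; dividing the collected terms by $m^3$ exhibits $\epsilon$ as a function decreasing in $m$ and in $d$ with $\lim_{m\to\infty}\epsilon(m,d) = 0$, and a direct substitution at the worst case $m=d=2$ gives $\epsilon < 5$.

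The step I expect to be the main obstacle is the first one: establishing the $d^m$ height bound for the regular chains $\Delta_{\mathbf i}$ emerging from Canny's generalized-resultant construction --- including the spurious extra factors exhibited in Example~\ref{ex:redundant} --- and confirming that the indices $\mathbf i$ of codimension larger than $m$, together with the auxiliary-variable bookkeeping needed to convert the height bound into a degree bound, do not push the estimate past $B(m,d)$. Once the input to $\unmixed$ is controlled, what remains is the substitution into Theorem~\ref{thm:outputs} and the elementary, if slightly tedious, estimate for $\epsilon$.
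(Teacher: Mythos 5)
Your proposal is correct and follows essentially the same route as the paper: cite \citep[Corollary~4.1.5]{SzantoThesis} for the $d^m$ height bound on the chains from Step~\ref{alg_one}, feed these parameters (height $d^m$, length at most $m$, $d_f=\max\{r,d\}$, $d_h=0$) into Theorem~\ref{thm:outputs}, use monotonicity in $|\Delta|$, convert heights to degrees via the factor $n$, and bound $\epsilon$ by direct computation. The only difference is cosmetic: the paper simply defines $\epsilon(m,d):=\log_d\bigl(\tfrac{1}{n}B(m,d)\bigr)/m^3-\tfrac12$ and asserts the properties by direct computation, whereas you itemize the exponent contributions, and you flag (but then absorb) the auxiliary-variable issue in the height-to-degree conversion at the same level of rigor as the paper.
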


\begin{remark}\label{rem:linear_comb}
    \citep[Lemma~3]{JeronimoSabia} implies that $f_0, \ldots, f_r$ can be replaced by their $n + 1$ generic linear combinations, so one can achieve $r \leq n$.
\end{remark}

\begin{proof}
By \citep[Corollary~4.1.5, p.~115]{SzantoThesis}, for every $\Delta \in \Sigma (F)$ computed in Step~\ref{alg_one} of Algorithm~\ref{alg:SzantoAlgorithm}, the height of polynomials in $\Delta$ is at most $d^{|\Delta|} \leq d^m$.

At Step~\ref{alg_two} of Algorithm~\ref{alg:SzantoAlgorithm}, for each $\Delta \in \Sigma (F)$, we compute the multiplication table $M(\Delta)$. 
Step~\ref{alg_three} of Algorithm~\ref{alg:SzantoAlgorithm} is a computation of 
\[
\mathcal{U}(\Delta):= \unmixed_{|\Delta|}^{n-|\Delta|}(\Delta,M(\Delta),f,1) \text{ for every } \Delta \in \Sigma(F)
\] where $f=f_0+yf_1+\ldots+y^r f_r \in k[x_1,\ldots,x_n,y]$. 
Note, that for each $\Delta \in \Sigma(F)$, we have $|\Delta| \leq m$.

By Theorem \ref{thm:outputs}, for every polynomial $p$ occurring in the computation of $\mathcal{U}(\Delta)$, we have

\begin{equation*}
\height (p) \leq \frac{1}{n} B(|\Delta|,d).
\end{equation*}

Since $B(m, d)$ is monotonic in $m$ and $|\Delta| \leq m$, this implies \eqref{inequa:DegreeBound}.

In case $r \leq d^m$, we have $\max \{r,d^m\}=d^m$. 
Direct computation shows that the right hand side of \eqref{inequa:DegreeBound} can be bounded by  $\deg p \leq n d^{(\frac{1}{2}+\epsilon) m^3}$, where 
\[\epsilon=\epsilon(m,d):=\frac{ \text{log}_{d} \left( \frac{1}{n} B(m,d) \right)}{m^3}-\frac{1}{2}\]
which is a decreasing function with $\epsilon (m,d) < 5$ for every $d \geq 2,\, m \geq 2$.
Moreover, $\lim\limits_{m \to \infty} \epsilon(m, d) = 0$ for all $d$.
\end{proof}

\begin{remark} \label{rem:IncreasingDegreeBound}
Unlike \citep[Theorem~4.1.7, p. 118]{SzantoThesis}, the height of polynomials occurring in the computations is bounded by $d^{O(m^3)}$. 
In general, Algorithm \ref{alg:SzantoAlgorithm} might produce a redundant unmixed decomposition for a given algebraic set. 
Moreover, it can output varieties defined by regular chains whose irreducible components are not the irreducible components of the initial algebraic set (see~Example~\ref{ex:redundant}).
Therefore the inequality (4.13) in \citep[p. 121]{SzantoThesis} is not necessarily true in general.
Instead of it we use~\eqref{eq:bound_for_remark} in order to bound $\tilde{d}_i$.
The right-hand side of~\eqref{eq:bound_for_remark} is $d^m$ in terms of the input data of Algorithm~\ref{alg:SzantoAlgorithm}, and this makes our bound $d^{O(m^3)}$.
\end{remark}

\begin{lemma} \label{lem:C(s)}
Consider $C(s)$ defined as (see also~\eqref{equ:C(s)})
\[
C(s):=6(d + 2)(16d^s + 16d \log (32d) + \log (d + 2)).
\]
Then we have:
\[\prod\limits_{s=1}^{m}{C(s)} \leq \frac{678\cdot 387}{242^2} \cdot (242(d+2))^m \cdot  d^{\frac{1}{2}m(m+1)} \log d,\,\,\, \text{and}\]
\[\sum\limits_{s=2}^{m}{\prod\limits_{i=s}^{m}{C(i)}} \leq  \frac{387\cdot 4}{967} \cdot (242(d+2))^{m-1}  \cdot d^{\frac{1}{2} m(m+1)-1}.\]
\end{lemma}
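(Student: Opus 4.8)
The plan is to reduce both inequalities to three elementary pointwise estimates on $C(s)$, valid for all $d\geq 2$: (i) $C(1)\leq 678(d+2)d\log d$; (ii) $C(2)\leq 387(d+2)d^2$; (iii) $C(s)\leq 242(d+2)d^s$ for every $s\geq 3$. I will also use the crude lower bound $C(s)\geq 6(d+2)\cdot 16d^s\geq 1536$ for $s\geq 2$ and $d\geq 2$ (there $d^s\geq 4$ and $d+2\geq 4$). To prove (i)--(iii), cancel the common factor $6(d+2)$ from the definition~\eqref{equ:C(s)}, move the leading multiple of $d^s$ to the right-hand side (there is no such term in (i)), divide through by $d$, and substitute $\log(32d)=5+\log d$; this turns (i) into $96+\tfrac{\log(d+2)}{d}\leq 97\log d$ and turns (ii), (iii) into $480+96\log d+\tfrac{6\log(d+2)}{d}\leq 291d$, respectively $\leq 146d^2$ (in (iii) one first weakens $146\,d^{s-1}\geq 146\,d^2$). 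The first holds because $\log(d+2)\leq d$ and $\log d\geq 1$ for $d\geq 2$; the other two hold at $d=2$ with a tiny margin---indeed with equality in (i) and (ii)---and the gap between the two sides has positive derivative for $d\geq 2$, so they hold for all $d\geq 2$.

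Granting (i)--(iii), the product bound is immediate. We may assume $m\geq 2$ (for $m=1$ the first claim is implied by (i), which is stronger than needed, and the second is the empty sum). Splitting off the first two factors and applying (i)--(iii),
\[
\prod_{s=1}^m C(s)=C(1)\,C(2)\prod_{s=3}^m C(s)\leq 678(d+2)d\log d\cdot 387(d+2)d^2\cdot\prod_{s=3}^m 242(d+2)d^s .
\]
Collecting the $m$ factors $(d+2)$, the factor $242^{m-2}$, and the power $d^{1+2+\sum_{s=3}^m s}=d^{m(m+1)/2}$ yields exactly $\tfrac{678\cdot 387}{242^2}\bigl(242(d+2)\bigr)^m d^{m(m+1)/2}\log d$.

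For the sum, write $\Pi_s:=\prod_{i=s}^m C(i)$, so that $\Pi_s=\Pi_2/\bigl(C(2)\cdots C(s-1)\bigr)\leq 1536^{-(s-2)}\,\Pi_2$ for $2\leq s\leq m$ by the lower bound on $C$. Summing the geometric series,
\[
\sum_{s=2}^m\Pi_s\leq \Pi_2\sum_{j\geq 0}1536^{-j}=\frac{1536}{1535}\,\Pi_2 ,
\]
while (ii)--(iii) give $\Pi_2=C(2)\prod_{s=3}^m C(s)\leq 387\cdot 242^{m-2}(d+2)^{m-1}d^{2+\sum_{s=3}^m s}=\tfrac{387}{242}\bigl(242(d+2)\bigr)^{m-1}d^{m(m+1)/2-1}$. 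It then remains only to check the numerical inequality $\tfrac{1536}{1535}\cdot\tfrac{387}{242}\leq\tfrac{387\cdot 4}{967}$, i.e. $1536\cdot 387\cdot 967\leq 1548\cdot 1535\cdot 242$; both sides are about $5.75\times 10^{8}$ and the left one is the smaller, which closes the argument.

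The only real difficulty is that the constants are tight: (i) and (ii) hold with equality at $d=2$, the margin in (iii) at $d=2$ is just $2$, and the final numerical comparison for the sum is also nearly an equality. So, although no step is conceptually hard, the write-up must handle $d=2$ exactly and invoke monotonicity in $d$ cleanly rather than estimate crudely; one must also remember to reduce to $m\geq 2$ so that both (i) and (ii) are genuinely in play.
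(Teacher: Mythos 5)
Your proposal is correct and follows essentially the same route as the paper: the same three pointwise estimates $C(1)\leq 678(d+2)d\log d$, $C(2)\leq 387(d+2)d^2$, and $C(s)\leq 242(d+2)d^s$ for $s\geq 3$ (with the same tightness at $d=2$), followed by the same telescoping of the product. The only difference is in the tail of the sum, where you control consecutive partial products via the lower bound $C(i)\geq 1536$ and a ratio-$1/1536$ geometric series, whereas the paper sums $(242(d+2))^{m-s+1}$ directly; both yield a correction factor small enough to fit under $\tfrac{387\cdot 4}{967}$.
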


\begin{proof}
Using $d \geq 2$, we can verify the following inequalities by direct computation
\begin{equation*}
C(s) \leq \left \{
\begin{aligned}
242 (d+2) d^s \text{ if } s > 2,\\
387 (d+2) d^s \text{ if } s = 2,\\
678 (d+2) d^s \log d \text{ if } s=1.
\end{aligned}
\right.
\end{equation*}
This immediately implies the first inequality in the lemma. For the second one:
\begin{align*}
\sum\limits_{s=2}^{m}{\prod\limits_{i=s}^{m}{C(i)}} &\leq \frac{387}{242}\sum\limits_{s=2}^{m}{(242(d+2))^{m-s+1} \cdot d^{s+(s+1)+ \ldots + m}}
 \\ &\leq \frac{387}{242} d^{\frac{1}{2} m(m+1)-1} \sum\limits_{s=1}^{m-1}{(242(d+2))^{s}}\\
 &\leq \frac{387}{242} d^{\frac{1}{2} m(m+1)-1} \cdot (242(d+2)^{m-1} \cdot \frac{(242(d+2))}{(242(d+2))-1} \\ &\leq \frac{387 \cdot 4}{967} \cdot d^{\frac{1}{2} m(m+1)-1} \cdot (242(d+2))^{m-1}.
 \qedhere
\end{align*}
\end{proof}


\section{Bound for the number of components}
\label{sec:components}

In this section, we study the number of components in the output of Sz\'ant\'o's algorithm.

\begin{theorem}\label{thm:BoundComponents}
Let $F \subset k[x_1,\ldots,x_n]$ be a finite set of polynomials of degree at most $d$. Let $m$ be the maximum codimension of prime components of $\sqrt{( F )} \subseteq k[x_1,\ldots,x_n]$. Then the number of unmixed components in the output of Algorithm \ref{alg:SzantoAlgorithm} applied to $F$ is at most
$$\binom{n}{m}\left( (m + 1)d^m+1\right)^m.$$
\end{theorem}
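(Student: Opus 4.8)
The plan is to bound the number of squarefree regular chains produced by Algorithm~\ref{alg:SzantoAlgorithm} by counting the contributions of its three main steps. Since the output is $\Theta(F) = \bigcup_{\mathbf{i} \subsetneq \{1,\ldots,n\}} \mathcal{U}(\Delta_{\mathbf{i}})$, and only index sets $\mathbf{i}$ with $|\mathbf{i}| = n - j$ for $j \leq m$ can actually contribute components (the relevant regular chains have $|\mathbf{i}|$ equal to the dimension of a prime component, hence codimension $n - |\mathbf{i}| \leq m$), the number of ambient index sets to consider is at most $\binom{n}{m}$ after observing monotonicity; more precisely the dominant term comes from $j = m$, giving the $\binom{n}{m}$ prefactor. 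For each such $\mathbf{i}$, Step~\ref{alg_one} produces a single regular chain $\Delta_{\mathbf{i}}$, so the entire count reduces to bounding the number of squarefree regular chains in the output of a single call $\unmixed_m^l(\Delta_{\mathbf{i}}, M(\Delta_{\mathbf{i}}), f, 1)$.

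The key step is therefore to prove, by induction on $m$, that $\unmixed_m^l$ applied to any valid input produces at most $\left((m+1)d^m + 1\right)^m$ squarefree regular chains, where $d$ bounds the degree of the input polynomials $F$. Tracing through Algorithm~\ref{alg:Unmixed_m_l}: the recursion branches in Step~\ref{alg:unmixed_three} over $1 \leq i \leq \deg_{x_n} g_m$ and over tuples $\mathbf{v} \in \mathbb{Z}_{\geq 0}^6$ with entries at most $\deg_{x_n} g_m$. However, the entries of $\mathbf{v}$ record degrees of six generalized gcds, and these six degrees are pairwise constrained — in fact the degree sequences that can actually occur along a single root-multiplicity stratification are far fewer than $(\deg_{x_n} g_m + 1)^6$. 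I expect the correct bookkeeping is that for each fixed $i$, only the feasible $\mathbf{v}$ contribute, and across all $i$ the total number of $(i, \mathbf{v})$ pairs that lead to a nonempty recursive call is bounded by something like $d$ (the number of distinct multiplicity classes of roots of $g_m$ over the relevant field), each such pair yielding chains that extend the chains returned by $\unmixed_{m-1}^l(\Lambda, \ldots)$ by one polynomial $q_{i,\mathbf{v}}$. The disjointness clause $\Ap(\Delta_i) \cap \Ap(\Delta_j) = \varnothing$ in the specification of $\unmixed$ is the crucial structural fact: it forces the associated primes of the output chains to partition those of $\Delta$, which caps the number of chains that carry genuinely new components and lets me absorb the branching into the base $d^m$ appearing in the bound.

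Concretely, I would set $N(m)$ to be the maximal number of output chains of $\unmixed_m^l$ over valid inputs with degree parameter $d$, establish a recursion of the shape $N(m) \leq \big(\text{number of feasible } (i,\mathbf{v})\big) \cdot N(m-1)$ with base case $N(1) \leq$ (number of distinct multiplicities of a univariate polynomial of degree $\leq d^m$), which is at most $d^m$ — actually at most $d$ in the relevant normalization, but I must be careful that the polynomial $g_m$ appearing at the top level of a call $\unmixed_m^l(\Delta_{\mathbf{i}}, M(\Delta_{\mathbf{i}}), f, 1)$ has degree bounded in terms of the original $F$, which by \citep[Corollary~4.1.5]{SzantoThesis} is $\deg_{x_{l+s}}(g_s) \leq d^{s} \leq d^m$ rather than $d$. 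This is exactly why the quantity inside the parentheses is $(m+1)d^m + 1$ and not $(m+1)d + 1$: the per-level branching factor is $\deg_{x_n} g_m + (\text{factor from the } \mathbf{v}\text{-tuples, of order } m d^m)$, and the clean way to package the feasible count is as $(m+1)d^m + 1$. Then $N(m) \leq \left((m+1)d^m+1\right)^m$ follows by multiplying $m$ factors, and combining with the $\binom{n}{m}$ choices of $\mathbf{i}$ and the fact that Step~\ref{alg_one} contributes one chain each gives the stated bound.

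The main obstacle, I expect, is making the count of feasible $(i,\mathbf{v})$ pairs rigorous: naively there are $\deg_{x_n} g_m \cdot (\deg_{x_n} g_m + 1)^6$ of them, which is far too many, so the argument must exploit that the six generalized gcd degrees in~\eqref{eq:ggcd} are determined (not merely constrained) once one fixes the multiplicity stratum of the roots of $g_m$ relative to $f$ and $h$ over a given prime component of $\Lambda$, and that the disjointness of associated primes in the output prevents double-counting across strata. Pinning down that the number of genuinely distinct $(i,\mathbf{v})$ contributing nonempty, non-redundant recursive branches is at most $(m+1)d^m + 1$ — rather than something exponential in the six coordinates — is the crux, and it may require a careful reading of the formulas for $\phi_{i,\mathbf{v}}, \psi_{i,\mathbf{v}}$ in the proof of Lemma~\ref{lem:Input(s)} together with the structure of~\eqref{eq:ggcd} to see which tuples can co-occur.
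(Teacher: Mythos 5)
Your setup (the $\binom{n}{m}$ prefactor from the index sets, one regular chain per $\mathbf{i}$ from Step~\ref{alg_one}, reduction to bounding a single call of $\unmixed$) matches the paper, but the core of your argument --- a recursion $N(m) \leq (\text{number of feasible }(i,\mathbf{v})) \cdot N(m-1)$ obtained by counting branches of the recursion tree of Algorithm~\ref{alg:Unmixed_m_l} --- is not carried out, and you yourself flag the count of feasible $(i,\mathbf{v})$ pairs as the unresolved crux. That is a genuine gap: nothing in your write-up actually shows that the number of contributing branches per level is $(m+1)d^m+1$ rather than the naive $d^m(d^m+1)^6$, and your proposed explanation of where $(m+1)d^m+1$ comes from (a per-level branching factor) is not where it comes from.

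The paper avoids branch-counting entirely by using the \emph{output specification} of $\unmixed$, which you mention but do not exploit: the output chains $\Delta_1,\ldots,\Delta_r$ have pairwise disjoint, nonempty sets of associated primes, all contained in $\Ap(\Delta)$. Hence $r \leq |\Ap(\Delta)|$, and since every component of $\Rep(\Delta)$ has codimension $s = |\Delta|$ and is an irreducible component of $V(\Delta)$, the number of such primes is at most $\deg V(\Delta) \leq \deg g_1 \cdots \deg g_s$ by B\'ezout (\citep[Theorem~1]{Heintz}). Each $g_i$ from \citep[Corollary~4.1.5]{SzantoThesis} involves at most $s+1$ variables and has height at most $d^s$, so $\deg g_i \leq (s+1)d^s$ and $|\mathcal{U}(\Delta)| \leq \bigl((m+1)d^m\bigr)^s$. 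The ``$+1$'' then appears only in the final summation, via $\sum_{s=1}^m \binom{n}{s}\bigl((m+1)d^m\bigr)^s \leq \binom{n}{m}\sum_{s}\binom{m}{s}\bigl((m+1)d^m\bigr)^s \leq \binom{n}{m}\bigl((m+1)d^m+1\bigr)^m$. So the bound measures associated primes of the intermediate chains, not recursion branches; to repair your proof you should replace the branch-counting induction with this degree argument.
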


\begin{proof}
Since the degree of the given polynomials is at most $d$, so is their height. 
Step~\ref{alg_one} of Algorithm \ref{alg:SzantoAlgorithm} produces a set $\Sigma (F):=\{\Delta_{\mathbf{i}}\,|\, \mathbf{i} \subsetneq [n]\}$ of regular chains such that for every prime component $P$ of $\sqrt{( F )}$, we have
\[(\dim P = |\mathbf{i}| \text{ and } P \cap k[x_i \;|\; i \in \mathbf{i} ]=0) \Rightarrow \Rep(\Delta) \subseteq P.\]
Due to~\citep[Theorem~4.4]{Hubert1}, the number of elements in a regular chain $\Delta$ is equal to the codimension of the ideal $\Rep(\Delta)$. 
Therefore the number of regular chains in $\Sigma(F)$ is not larger than the number of proper subsets of $[n]$ which has cardinality at most $m$.

In Step~\ref{alg_three}, we use the function $\unmixed$ to transform each  regular chain $\Delta \in \Sigma(F)$ to the set
\[\mathcal{U}(\Delta):=\unmixed_{|\Delta|}^{n-|\Delta|}(\Delta,M(\Delta),f,1)\]
of squarefree regular chains (see Algorithm~\ref{alg:Unmixed_m_l}). 
Thus the number of squarefree regular chains in the output is 
\begin{align*}
M(n,m,d)  :=  \left | \bigcup\limits_{\Delta \in \Sigma(F)} \mathcal{U}(\Delta) \right |  \leq \sum\limits_{\Delta \in \Sigma(F)} \left | \mathcal{U}(\Delta) \right |.
\end{align*}

We fix a regular chain $\Delta = ( g_1, \ldots, g_s )$ of codimension $s$.
The collection of squarefree regular chains in the output of $\unmixed_{|\Delta|}^s$ is simple, meaning that any two distinct unmixed components have no common irreducible components (see \citep[page 124]{SzantoThesis}).  
Since all the components of $\Rep(\Delta)$ are of codimension $s$, $|\mathcal{U}(\Delta)|$ is bounded from above by the degree of $\Rep(\Delta)$.
Due to the definition of $\Rep(\Delta)$, we have $\Rep(\Delta) \supset (\Delta)$.
Moreover, since $V(\Delta)$ and $V(\Rep(\Delta))$ coincide outside the zero set of the product of the initials of $\Delta$, every irreducible component of $V(\Rep(\Delta))$ is an irreducible component of $V(\Delta)$.
Hence, the degree of $\Rep(\Delta)$ does not exceed the sum of degrees of irreducible components of $V(\Delta)$.
The latter can be bounded by $\deg g_1\cdot \ldots\cdot \deg g_s$ due to~\citep[Theorem~1]{Heintz}.
The proof of \citep[Corollary 4.1.5]{SzantoThesis} implies that every $g_i$ depends on at most $s + 1$ variables, so 
\[
\deg g_i \leq (s + 1)\height g_i \leq (s + 1)d^s.
\]
Therefore 
\[
|\mathcal{U}(\Delta)| \leq (s + 1)^s d^{s^2} \leq \left( (m + 1)d^m \right)^s.
\]

Since for each $s=1,\ldots,m$, there are $\binom{n}{s}$ squarefree regular chains in $\Sigma(F)$ of cardinality $s$,
\begin{align*}
M(n,m,d) \leq \sum_{s=1}^m \binom{n}{s} \left( (m + 1)d^m \right)^s.
\end{align*}
Since $\binom{n}{s} \leq \binom{n}{m} \cdot \binom{m}{s}$, we have that
$M(n,m,d) \leq \binom{n}{m}\left( (m + 1)d^m+1\right)^m$.
\end{proof}

\begin{acknowledgment}
We are grateful to Agnes Sz\'ant\'o and Alexey Ovchinnikov for discussions related to this work and to the anonymous referees for comments and suggestions, which helped us improve the paper significantly.
\end{acknowledgment}

\section*{Appendix}
\label{sec:appendix}

The following results on matrix representations of pseudoremainders are used in Section~\ref{sec:degrees}.  They are mentioned and used in \citep[Section 3.3]{SzantoThesis}.  
We include here a shortened and refined version of them.

Let $f \in k[x_1, x_2, \ldots, x_l], g \in k[x_1, x_2, \ldots, x_n]$ with $k$ a field and $l \geq n$.  
We wish to describe the pseudoremainder of $f$ by $g$ with respect to $x_n$ in matrix form.  
More specifically, we wish to describe this pseudoremainder when $\text{deg}_{x_n}(g)=d$ and $\text{deg}_{x_n}(f) \leq 2d-2$, (the application in mind being computing the structure constants for $A(\Delta)$, see Definition~\ref{def:mult_table}).  
We will allow the degree of $f$ to go up to $2d-1$ in fact.
We first write $f$ and $g$ as univariate polynomials in $x_n$ with coefficients $k[x_1,\ldots,x_{n-1},x_{n+1}, \ldots, x_l]$:
$$
f = f_0+f_1x_n+\dots+f_{2d-1}x_n^{2d-1}, \quad g = g_0+g_1x_n+\dots+g_dx_n^d.
$$
Note that the difference between the degrees in $x_n$ of $f$ and $g$ is $d-1$.  Thus, the pseudoremainder equation we consider (in scalar form) is $g_d^df = gq+r$
where the degrees in $x_n$ of $r, q$ are less than $d$.  
Writing $q$ and $r$ as we wrote $f, g$ above and substituting these expressions into the pseudoremainder equation, we obtain:
$$
g_d^d(f_0+\ldots+f_{2d-1}x_n^{2d-1}) = (g_0+\ldots+g_dx_n^d)(q_0+\ldots+q_{d-1}x_n^{d-1})+
r_0+\ldots+r_{d-1}x_n^{d-1}.
$$

Comparing coefficients of the powers of $x_n$ from $d$ to $2d-1$, we obtain the following linear system
$$\begin{pmatrix} 
g_d & 0 & 0 & \dots & 0 \\
g_{d-1} & g_d & 0 & \dots & 0 \\
\dots & \dots & \dots & \dots & \dots\\
g_1 & g_2 & \dots & \dots & g_d 
\end{pmatrix} 
\begin{pmatrix}
q_{d-1} \\
q_{d-2} \\
\dots \\
q_0
\end{pmatrix} = 
\begin{pmatrix}
f_{2d-1} \\
f_{2d-2} \\
\dots \\
f_d
\end{pmatrix} g_d^d.$$
We write the system above as $G_d\mathbf{q} = \mathbf{f}^{\uupper}g_d^d$.  Since $g_d \neq 0$ (as $g$ is assumed to have degree $d$ in $x_n$), we can find the coefficients of the desired quotient by inverting $G_d$.

Since $r = g_d^df - qg$, after substituting we obtain one more linear system
$$\begin{pmatrix}
r_{d-1} \\
r_{d-2} \\
\dots \\
r_0
\end{pmatrix} = 
g_d^d\begin{pmatrix}
f_{d-1} \\
f_{d-2} \\
\dots \\
f_0
\end{pmatrix} - 
\begin{pmatrix} 
g_0 & g_1 & \dots & \dots & g_{d-1} \\
0 & g_0 & g_1 & \dots & g_{d-2} \\
\dots & \dots & \dots & \dots & \dots\\
0 & 0 & \dots & \dots & g_0
\end{pmatrix}
\begin{pmatrix}
q_{d-1} \\
q_{d-2} \\
\dots \\
q_0
\end{pmatrix}.
$$
We write this system as $\mathbf{r} = g_d^d \mathbf{f}^{\llower} - G_0\mathbf{q}$.
Combining with the equation for $\mathbf{q}$, we obtain $$\mathbf{r} = g_d^d \mathbf{f}^{\llower} - g_d^dG_0G_d^{-1}\mathbf{f}^{\uupper}.$$
To count multiplications in the formula for the pseudoremainder, we re-express $G_d^{-1}$ using Cramer's Rule: $G_d^{-1} = g_d^{-d}\cdot \adj(G_d)$
where $\adj(G_d)$ denotes the adjugate of $G_d$, (i.e. its matrix of cofactors transposed).  So we have
$\mathbf{r} = g_d^d \mathbf{f}^{\llower} - G_0\cdot \adj(G_d)\mathbf{f}^{\uupper}.$

Observe that the entries of $\adj(G_d)$ are sums of products of $d-1$ entries of $G_d$.


\begin{thebibliography}{18}
\expandafter\ifx\csname natexlab\endcsname\relax\def\natexlab#1{#1}\fi
\expandafter\ifx\csname url\endcsname\relax
  \def\url#1{\texttt{#1}}\fi
\expandafter\ifx\csname urlprefix\endcsname\relax\def\urlprefix{URL }\fi

\bibitem[{Alvandi et~al.(2014)Alvandi, Chen, Marcus, Maza, Schost, and
  Vrbik}]{RegularChains}
Alvandi, P., Chen, C., Marcus, S., Maza, M.~M., Schost, {\'E}., Vrbik, P.,
  2014. Doing algebraic geometry with the regularchains library. In: Hong, H.,
  Yap, C. (Eds.), Mathematical Software -- ICMS 2014. Springer Berlin
  Heidelberg, pp. 472--479.
\newline\urlprefix\url{doi.org/10.1007/978-3-662-44199-2_71}

\bibitem[{{B\"urgisser} and {Scheiblechner}(2009)}]{BurgisserSchei}
{B\"urgisser}, P., {Scheiblechner}, P., 2009. On the complexity of counting
  components of algebraic varieties. Journal of Symbolic Computation 44~(9),
  1114 -- 1136.
\newline\urlprefix\url{http://dx.doi.org/10.1016/j.jsc.2008.02.009}

\bibitem[{{Chistov}(2009)}]{ChistovDoubExpLowBound}
{Chistov}, A., 2009. {Double-exponential lower bound for the degree of any
  system of generators of a polynomial prime ideal.} {St. Petersbg. Math. J.}
  20~(6), 983--1001.
\newline\urlprefix\url{http://dx.doi.org/10.1090/S1061-0022-09-01081-4}

\bibitem[{Dahan et~al.(2005)Dahan, Moreno~Maza, Schost, Wu, and Xie}]{Lifting}
Dahan, X., Moreno~Maza, M., Schost, E., Wu, W., Xie, Y., 2005. Lifting
  techniques for triangular decompositions. In: I{SSAC}'05. ACM, New York, pp.
  108--115.
\newline\urlprefix\url{http://dx.doi.org/10.1145/1073884.1073901}

\bibitem[{Dahan and Schost(2004)}]{SchostDahan}
Dahan, X., Schost, E., 2004. Sharp estimates for triangular sets. In:
  Proceedings of the 2004 International Symposium on Symbolic and Algebraic
  Computation. ISSAC '04. ACM, New York, NY, USA, pp. 103--110.
\newline\urlprefix\url{http://dx.doi.org/10.1145/1005285.1005302}

\bibitem[{Gallo and Mishra(1991)}]{GalloMishra}
Gallo, G., Mishra, B., 1991. Efficient algorithms and bounds for wu-ritt
  characteristic sets. In: Effective methods in algebraic geometry. Springer,
  pp. 119--142.
\newline\urlprefix\url{https://doi.org/10.1007/978-1-4612-0441-1_8}

\bibitem[{{Giusti} et~al.(2001){Giusti}, {Lecerf}, and {Salvy}}]{GiustiLecerf}
{Giusti}, M., {Lecerf}, G., {Salvy}, B., 2001. {A Gr\"obner free alternative
  for polynomial system solving.} {Journal of Complexity} 17~(1), 154--211.
\newline\urlprefix\url{http://dx.doi.org/10.1006/jcom.2000.0571}

\bibitem[{Heintz(1983)}]{Heintz}
Heintz, J., 1983. Definability and fast quantifier elimination in algebraically
  closed fields. Theoretical Computer Science 24~(3), 239 -- 277.
\newline\urlprefix\url{https://doi.org/10.1016/0304-3975(83)90002-6}

\bibitem[{Hubert(2003)}]{Hubert1}
Hubert, E., 2003. Notes on triangular sets and triangulation-decomposition
  algorithms {I}: Polynomial systems. In: Proceedings of the 2nd International
  Conference on Symbolic and Numerical Scientific Computation. SNSC'01.
  Springer-Verlag, Berlin, Heidelberg, pp. 1--39.

\bibitem[{Jeronimo and Sabia(2002)}]{JeronimoSabia}
Jeronimo, G., Sabia, J., 2002. Effective equidimensional decomposition of
  affine varieties. Journal of Pure and Applied Algebra 169~(2), 229 -- 248.
\newline\urlprefix\url{https://doi.org/10.1016/S0022-4049(01)00083-4}

\bibitem[{Laplagne(2006)}]{Laplagne}
Laplagne, S., 2006. An algorithm for the computation of the radical of an
  ideal. In: Proceedings of the 2006 International Symposium on Symbolic and
  Algebraic Computation. ISSAC '06. ACM, New York, NY, USA, pp. 191--195.
\newline\urlprefix\url{http://dx.doi.org/10.1145/1145768.1145802}

\bibitem[{Lecerf(2000)}]{Lecerf00}
Lecerf, G., 2000. Computing an equidimensional decomposition of an algebraic
  variety by means of geometric resolutions. In: Proceedings of the 2000
  International Symposium on Symbolic and Algebraic Computation. pp. 209--216.
\newline\urlprefix\url{http://doi.acm.org/10.1145/345542.345633}

\bibitem[{Lecerf(2003)}]{Lecerf03}
Lecerf, G., 2003. Computing the equidimensional decomposition of an algebraic
  closed set by means of lifting fibers. Journal of Complexity 19~(4), 564 --
  596.
\newline\urlprefix\url{https://doi.org/10.1016/S0885-064X(03)00031-1}

\bibitem[{{Ovchinnikov} et~al.(2016){Ovchinnikov}, {Pogudin}, and
  {Vo}}]{OvchinnikovPogudinVo}
{Ovchinnikov}, A., {Pogudin}, G., {Vo}, T.~N., 2018. {Bounds for elimination of
  unknowns in systems of differential-algebraic equations.} Preprint.
\newline\urlprefix\url{https://arxiv.org/abs/1610.04022}

\bibitem[{Schost(2003)}]{Schost}
Schost, E., 2003. Complexity results for triangular sets. Journal of Symbolic
  Computation 36~(3–4), 555 -- 594, {ISSAC} 2002.
\newline\urlprefix\url{http://dx.doi.org/10.1016/S0747-7171(03)00095-6}

\bibitem[{{Sz\'ant\'o}(1997)}]{SzantoPaper}
{Sz\'ant\'o}, A., 1997. {Complexity of the Wu-Ritt decomposition.} In: {Second
  international symposium on parallel symbolic computation, PASCO '97, Maui,
  HI, USA, July 20--22}. New York, NY: ACM Press, pp. 139--149.
\newline\urlprefix\url{https://doi.org/10.1145/266670.266716}

\bibitem[{{Sz\'ant\'o}(1999)}]{SzantoThesis}
{Sz\'ant\'o}, A., 1999. Computation with polynomial systems. Ph.D. thesis,
  Cornell University.
\newline\urlprefix\url{http://www4.ncsu.edu/~aszanto/szanto.pdf}

\bibitem[{Wang(2002)}]{Epsilon}
Wang, D., 2002. Epsilon: A library of software tools for polynomial
  elimination. In: Mathematical Software. World Scientific, pp. 379--389.
\newline\urlprefix\url{https://doi.org/10.1142/9789812777171_0040}

\end{thebibliography}
\end{document}